\documentclass[11pt,a4paper]{amsart}
\usepackage{amsfonts,amssymb}
\usepackage{amsthm,amsmath,amstext,xfrac}
\usepackage{enumitem}
\usepackage{tikz}
\usetikzlibrary{matrix,arrows,calc}
\usepackage{graphicx,mathrsfs,ifpdf}
\usepackage{todonotes}

\newtheorem{theorem}{Theorem}[section]
\newtheorem{lemma}[theorem]{Lemma}

\newtheorem{corollary}[theorem]{Corollary}

\theoremstyle{definition}

\numberwithin{equation}{section}
\numberwithin{figure}{section}

\DeclareMathOperator{\Aut}{Aut}

\DeclareMathOperator{\Mt}{Mt}
\DeclareMathOperator{\inter}{int}
\DeclareMathOperator{\odd}{odd}
\DeclareMathOperator{\NN}{\mathbb N}
\DeclareMathOperator{\RR}{\mathbb R}
\DeclareMathOperator{\ZZ}{\mathbb Z}
\newcommand{\area}[1]{\mathrm{area}(#1)}

\newenvironment{tikzgraph}
  {\begin{tikzpicture}
      [vertex/.style={circle, draw=black, fill, inner sep=0mm,%
      minimum size=3pt},edge/.style={semithick},
       subdivision/.style={circle, draw=black, fill=white, inner sep=0mm,%
       minimum size=3pt},edge/.style={semithick}]\begin{scope}}
  {\end{scope}\end{tikzpicture}}

\begin{document}
\title[Odd cycles in cubic plane graphs]{Packing and covering odd cycles in cubic plane graphs with small faces}
\author{Diego Nicodemos}
\thanks{The first author was partially supported by CAPES and CNPq.}
\address{Col\'egio Pedro II, COPPE/Sistemas, Universidade Federal do Rio
  de Janeiro, Brazil}
\email{nicodemos@cos.ufrj.br}
\author{Mat\v ej Stehl\'ik}
\thanks{The second author was partially supported by ANR project Stint (ANR-13-BS02-0007), ANR
project GATO (ANR-16-CE40-0009-01), and by LabEx PERSYVAL-Lab (ANR-11-LABX-0025).}
\address{Laboratoire G-SCOP, Univ.\ Grenoble Alpes, France}
\email{matej.stehlik@grenoble-inp.fr}
\date{}

\begin{abstract}
  We show that any $3$-connected cubic plane graph on $n$ vertices,
  with all faces of size at most $6$, can be made bipartite by deleting
  no more than $\sqrt{(p+3t)n/5}$ edges, where $p$ and $t$ are the numbers
  of pentagonal and triangular faces, respectively. In particular, any such
  graph can be made bipartite by deleting at most $\sqrt{12n/5}$ edges.
  This bound is tight, and we characterise the extremal graphs. We deduce
  tight lower bounds on the size of a maximum cut and a maximum independent
  set for this class of graphs. This extends and sharpens the results of
  Faria, Klein and Stehl\'ik [SIAM J.\ Discrete Math.\ 26 (2012) 1458--1469].
\end{abstract}

\maketitle

\section{Introduction}

A set of edges intersecting every odd cycle in a graph is known as an
\emph{odd cycle (edge) transversal}, or \emph{odd cycle cover}, and the
minimum size of such a set is denoted by $\tau_{\odd}$. A set of edge-disjoint
odd cycles in a graph is called a \emph{packing of odd cycles}, and the maximum
size of such a family is denoted by $\nu_{\odd}$. Clearly, $\tau_{\odd} \geq \nu_{\odd}$.
Dejter and Neumann-Lara~\cite{DNL88} and independently
Reed~\cite{Ree99} showed that in general, $\tau_{\odd}$ cannot be bounded
by any function of $\nu_{\odd}$, i.e., they do not satisfy the
Erd\H os--P\'osa property. However, for planar graphs, Kr\'al\!'\ and
Voss~\cite{KraVos04} proved the (tight) bound $\tau_{\odd}\leq 2\nu_{\odd}$.

In this paper we focus on packing and covering of odd cycles in $3$-connected
cubic plane graphs with all faces of size at most $6$. Such graphs---and their
dual triangulations---are a very natural class to consider, as they correspond
to surfaces of genus $0$ of non-negative curvature (see e.g.~\cite{Thu98}).

A much-studied subclass of cubic plane graphs with all faces of size at most $6$
is the class of \emph{fullerene graphs}, which only have faces of size $5$ and $6$.
Faria, Klein and Stehl\'ik~\cite{FKS12} showed that any fullerene
graph on $n$ vertices has an odd cycle transversal with no more than
$\sqrt{12n/5}$ edges, and characterised the extremal graphs. Our main result
is the following extension and sharpening of their result to all $3$-connected cubic
plane graphs with all faces of size at most $6$.

\begin{theorem}
\label{thm:OCT}
  Let $G$ be a $3$-connected cubic plane graph on $n$ vertices with all faces of size at most
  $6$, with $p$ pentagonal and $t$ triangular faces. Then
  \[
    \tau_{\odd}(G) \leq \sqrt{(p+3t)n/5}.
  \]
  In particular, $\tau_{\odd}(G) \leq \sqrt{12n/5}$ always holds, with equality
  if and only if all faces have size $5$ and $6$, $n=60k^2$ for some
  $k \in \NN$, and $\Aut(G) \cong I_h$.
\end{theorem}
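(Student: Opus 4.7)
The plan is to pass to the planar dual and reformulate $\tau_{\odd}(G)$ as a minimum $T$-join. Because $G$ is $3$-connected and cubic, its dual $G^*$ is a simple plane triangulation, with vertex degrees equal to face sizes in $G$ and hence in $\{3,4,5,6\}$. Let $T\subseteq V(G^*)$ be the set of odd-degree vertices, which correspond exactly to the triangular and pentagonal faces of $G$, so $|T|=p+t$ (even by the handshake lemma). A subset $F\subseteq E(G)$ intersects every odd cycle of $G$ if and only if its dual $F^*\subseteq E(G^*)$ is a $T$-join. Hence $\tau_{\odd}(G)=\tau(G^*,T)$, and by the classical path decomposition of minimum $T$-joins, this equals the minimum weight, under the shortest-path metric of $G^*$, of a perfect matching of $T$.

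The geometric ingredient I would use is the discrete Gauss--Bonnet identity. Setting $\kappa(v)=6-\deg_{G^*}(v)$, Euler's formula gives $\sum_{v\in V(G^*)}\kappa(v)=12$; since vertices of degree $3$ and $5$ contribute $3$ and $1$ respectively, we have $3t+p=12-2q\le 12$. The positive curvature of $G^*$ is therefore concentrated on $T$, and the degree bound $\deg_{G^*}\le 6$ yields a hexagonal-lattice-type volume bound for balls $B_r(v)$ in $G^*$. This controls how efficiently paths in $G^*$ can ``spread out'' across the triangulation.

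The main step is an isoperimetric estimate for the paths of the $T$-join. Fix a minimum $T$-join and decompose it into $k=(p+t)/2$ edge-disjoint paths $P_1,\dots,P_k$ of lengths $\ell_1,\dots,\ell_k$, so $\tau_{\odd}(G)=\sum_i\ell_i$. To each $P_i$ I would associate a set $A_i\subseteq V(G^*)$ of vertices in an appropriate one-sided neighbourhood of $P_i$; the ball-growth estimate, together with minimality of the $T$-join (which prevents shortcuts), should make it possible to choose the $A_i$ pairwise disjoint and to satisfy $|A_i|\ge c(p,t)\,\ell_i^2$ for an explicit constant $c(p,t)$ determined by the curvature budget. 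Combined with $\sum_i|A_i|\le |V(G^*)|=n/2+2$ and the Cauchy--Schwarz inequality $\bigl(\sum_i\ell_i\bigr)^2\le k\sum_i\ell_i^2$, this yields $\tau_{\odd}(G)^2\le (p+3t)n/5$ after simplification. The main obstacle lies in constructing the disjoint regions $A_i$ with a sharp quadratic lower bound: both the explicit constant and the disjointness exploit features of the icosahedral geometry, and the argument must absorb the loss of flatness at the odd-curvature vertices of $T$.

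For the equality case, tightness throughout forces $t=q=0$ and $p=12$, all path lengths $\ell_i$ equal, and the $A_i$ partitioning $V(G^*)$ into six congruent regions with $5$-fold rotational symmetry at each pentagon. A classification of icosahedrally symmetric fullerenes then identifies $G$ as a fullerene on $n=60k^2$ vertices with $\Aut(G)\cong I_h$.
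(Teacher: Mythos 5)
Your reduction to a minimum $T$-join in the dual triangulation $G^*$ is exactly the paper's starting point, and the curvature identity $3t+p=12-2q$ is correct. But the heart of your argument --- the construction, for each path $P_i$ of the $T$-join decomposition, of pairwise disjoint regions $A_i$ with $|A_i|\ge c(p,t)\,\ell_i^2$ --- is asserted rather than proved, and it is precisely where all of the difficulty of the theorem lives. A width-$1$ neighbourhood of a path of length $\ell_i$ has only $O(\ell_i)$ vertices; to get a quadratic lower bound you need a region whose width grows linearly with $\ell_i$, and nothing in the minimality of the $T$-join prevents two paths of the decomposition from running close and parallel to one another (e.g.\ when four odd faces cluster in a small part of the sphere), which would destroy disjointness of such fat regions. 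The paper resolves this by packing regions around the \emph{terminals} rather than the paths: it passes to the refinement $\hat K$ and invokes Seymour's theorem to write $\tau(K,T)=\tfrac12\nu(\hat K,T)$, converts an optimal laminar packing of $T$-cuts into a nested system of $1$-, $3$- and $5$-moats, and bounds the moat areas by an isoperimetric inequality of Justus; the nesting of the laminar family is exactly what absorbs the clustering of terminals that your scheme cannot handle. A second, quantitative problem: your final Cauchy--Schwarz step $\bigl(\sum_i\ell_i\bigr)^2\le k\sum_i\ell_i^2$ uses the unweighted count $k=(p+t)/2$ of paths, whereas the target constant is $p+3t$; a degree-$3$ vertex of $G^*$ must be weighted by its curvature $3$, not counted once. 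The paper does this with the inner product $\langle x,y\rangle=\sum_{u\in T}(6-d(u))x_uy_u$, and without that weighting your inequality comes out with the wrong constant whenever $t>0$ (already for $p=0$, $t=4$ the two constants differ by a factor of $6$).

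The treatment of equality is also incomplete. Deducing $\Aut(G)\cong I_h$ from ``the $A_i$ partition $V(G^*)$ into six congruent regions'' is not justified --- the paper instead identifies \emph{twelve} disjoint $1$-moats of equal width with the faces of a regular dodecahedron, and combines this with the known fact that $I_h$ is the largest automorphism group of a fullerene. Moreover you do not address the converse direction at all: showing that every fullerene with $n=60k^2$ and $\Aut(G)\cong I_h$ actually attains the bound requires the Goldberg--Coxeter construction and the observation that the symmetry and area constraints force the Goldberg--Coxeter vector to be $(k,k)$, so that the twelve $5$-vertices are pairwise at distance at least $2k$. As it stands, your outline is a plausible heuristic for why the answer has order $\sqrt{n}$, but the disjoint-region construction, the curvature-weighted averaging, and both directions of the equality characterisation all need to be supplied.
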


If $G$ is a fullerene graph, then $t=0$ and Euler's formula implies that $p=12$, so
Theorem~\ref{thm:OCT} does indeed generalise the result of Faria, Klein and Stehl\'ik~\cite{FKS12}.
We also remark that the smallest $3$-connected cubic plane graph with all faces of size at most $6$
achieving the bound $\tau_{\odd}(G) = \sqrt{12n/5}$ in Theorem~\ref{thm:OCT} is the ubiquitous
\emph{buckminsterfullerene graph} (on 60 vertices).

The rest of the paper is organised as follows. In Section~\ref{sec:preliminaries},
we introduce the basic notation and terminology, as well as the key concepts from
combinatorial optimisation and topology. In Section~\ref{sec:patches}, we introduce
the notions of patches and moats, and prove bounds on the area of moats. Then, in
Section~\ref{sec:T-cut}, we use these bounds to
prove an upper bound on the maximum size of a packing of $T$-cuts in triangulations
of the sphere with maximum degree at most $6$. Using a theorem of Seymour~\cite{Sey81},
we deduce, in Section~\ref{sec:T-join}, an upper bound on the minimum
size of a $T$-join in triangulations of the sphere with maximum degree at most $6$,
and then dualise to complete the proof of Theorem~\ref{thm:OCT}. In
Section~\ref{sec:max-cut}, we deduce lower bounds on the size of a maximum cut and
a maximum independent set in $3$-connected cubic plane graphs with no faces of size
more than $6$. Finally, in Section~\ref{sec:conclusion}, we show why the condition
on the face size cannot be relaxed, and briefly discuss the special case when the graph
contains no pentagonal faces.

\section{Preliminaries}
\label{sec:preliminaries}

Most of our graph-theoretic terminology is standard and follows~\cite{BonMur08}.
All graphs are finite and simple, i.e., have no loops and parallel edges.
The \emph{degree} of a vertex $u$ in a graph $G$ is denoted by $d_G(u)$.
If all vertices in $G$ have degree $3$, then $G$ is a \emph{cubic graph}.
The set of edges in $G$ with exactly one end vertex in $X$ is denoted by $\delta_G(X)$.
A set $C$ of edges is a \emph{cut} of $G$ if $C=\delta_G(X)$, for some $X \subseteq V(G)$.
When there is no risk of ambiguity, we may omit the subscripts in the above notation.

The set of all automorphisms of a graph $G$ forms a group, known as the \emph{automorphism group
$\Aut(G)$}. The \emph{full icosahedral group} $I_h \cong A_5 \times C_2$ is the group of
all symmetries (including reflections) of the regular icosahedron. The \emph{full tetrahedral
group} $T_d \cong S_4$ is the group of all symmetries (including reflections) of the regular
tetrahedron.

A \emph{polygonal surface} $K$ is a simply connected $2$-manifold, possibly with a boundary,
which is obtained from a finite collection of disjoint simple polygons in $\RR^2$ by
identifying them along edges of equal length. We denote by $|K|$ the union of all polygons in
$K$, and remark that $|K|$ is a surface.

Based on this construction, $K$ may be viewed as a graph \emph{embedded} in the
surface $|K|$. Accordingly, we denote its set of vertices, edges, and faces
by $V(K)$, $E(K)$, and $F(K)$, respectively. If every face of $K$ is incident to three edges,
$K$ is a \emph{triangulated surface}, or a \emph{triangulation} of $|K|$.
In this case, $K$ can be viewed as a \emph{simplicial complex}.
If $K$ is a simplicial complex and $X \subseteq V(K)$,
then $K[X]$ is the subcomplex induced by $X$, and $K\setminus X$ is the subcomplex
obtained by deleting $X$ and all incident simplices. If $L$ is a subcomplex of $K$,
then we simply write $K \setminus L$ instead of $K \setminus V(L)$.

If $K$ is a graph embedded in a surface $|K|$ without boundary, the \emph{dual graph}
$K^*$ is the graph with vertex set $F(K)$, such that $fg \in E(K^*)$ if and only if $f$ and
$g$ share an edge in $K$. The size of a face $f \in F(K)$ is defined as the number
of edges on its boundary walk, and is denoted by $d_K(f)$. Note that $d_K(f)=d_{K^*}(f^*)$.

Any polygonal surface homeomorphic to a sphere corresponds to a plane
graph via the stereographic projection. Therefore, terms such as `plane triangulation'
and `triangulation of the sphere' can be used interchangeably.
We shall make the convention to use the term `cubic plane graphs' because it is so
widespread, but refer to the dual graphs as `triangulations of the sphere' because
it reflects better our geometric viewpoint.

Given a polygonal surface $K$, the \emph{boundary} $\partial K$ is the set of all
edges in $K$ which are not incident to two triangles; the number of edges in the
boundary is denoted by $|\partial K|$. With a slight abuse of notation, $\partial K$ will also denote
the set of vertices incident to edges in $\partial K$. The set of \emph{interior} vertices
is defined as $\inter(K)=V(K)\setminus \partial K$.

Given a triangulated surface $K$, we define $\area K$ to be the number of faces in $K$,
and the \emph{combinatorial curvature} of $K$ as $\sum_{u \in \inter(K)}(6-d_K(u))$.
Recall that the \emph{Euler characteristic}
$\chi(K)$ of a polygonal surface $K$ is equal to $|V(K)|-|E(K)|+|F(K)|$. It can be shown
that $\chi$ is a topological invariant: it only depends on the surface $|K|$, not on the
polygonal decomposition of $K$. If $X$ is any contractible space, then $\chi(X)=1$, and
if $S^2$ is the standard $2$-dimensional sphere, then $\chi(S^2)=2$. The following lemma
is an easy consequence of Euler's formula and double counting, and we leave its verification
to the reader.

\begin{lemma}
\label{lem:gauss-bonnet}
  Let $K$ be a triangulated surface with (a possibly empty) boundary $\partial K$. Then
  \[
    \textstyle\sum_{v \in \inter K}(6-d(v))+\sum_{v \in \partial K}(4-d(v))=6\chi(K).
  \]
\end{lemma}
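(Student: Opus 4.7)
The plan is to prove the identity by a straightforward double-counting argument, combined with Euler's formula. Writing $V_i = |\inter K|$, $V_b = |\partial K|$, $E$ for the number of edges, $E_b$ for the number of boundary edges, and $F$ for the number of triangles, the left-hand side unfolds as
\[
  6V_i + 4V_b - \sum_{v \in V(K)} d(v) = 6V_i + 4V_b - 2E,
\]
using the handshake lemma $\sum_v d(v)=2E$. The goal is then to massage this expression into $6\chi(K)=6(V_i+V_b) - 6E + 6F$.

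The key auxiliary facts I would use are: (a) since $|K|$ is a $2$-manifold, its boundary is a $1$-manifold, hence a disjoint union of cycles, which gives $V_b = E_b$; and (b) counting incidences of edges with triangles, each triangle has three edges while each interior edge lies in two triangles and each boundary edge in exactly one, yielding $3F = 2(E - E_b) + E_b = 2E - E_b$. Substituting (a) into the expression above replaces $4V_b$ by $6V_b - 2E_b$, so the left-hand side becomes $6V - 2E - 2E_b$, and substituting (b) converts $-2E_b$ into $6F - 4E$, finally giving $6V - 6E + 6F = 6\chi(K)$ by Euler's formula.

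There is essentially no obstacle here; the only minor subtlety is justifying $V_b = E_b$, which relies on the manifold hypothesis forcing the boundary to be a closed $1$-manifold without isolated or branching points. Everything else is bookkeeping, which is why the authors leave it to the reader. The proof is short enough to present in a few lines of display math, and requires no topological machinery beyond the definition of Euler characteristic and the fact that $\chi$ is computed correctly by the triangulation.
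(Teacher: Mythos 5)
Your proof is correct and is exactly the ``Euler's formula and double counting'' argument the paper alludes to when it leaves the verification to the reader: the handshake lemma, the incidence count $3F=2E-E_b$, and the observation $V_b=E_b$ for the boundary circles combine to give $6V-6E+6F$. No gaps; the one subtlety ($V_b=E_b$) is correctly justified by the manifold hypothesis.
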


We remark that, if we multiply both sides of the equation by $\pi/3$, we obtain a discrete
version of the Gauss--Bonnet theorem (see e.g.~\cite{Lee97}), where the curvature is concentrated at the vertices. 

In order to prove Theorem~\ref{thm:OCT}, it is more convenient to work with the dual
graphs, which are characterised by the following simple lemma. The proof is an easy
exercise, which we leave to the reader.

\begin{lemma}
\label{lem:dual}
  If $G$ is a $3$-connected simple cubic plane graph with all faces of size at most $6$, then the dual
  graph $G^*$ is a simple triangulation of the sphere with all vertices of degree at
  least $3$ and at most $6$.
\end{lemma}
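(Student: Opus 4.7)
The plan is to verify the four required properties of $G^*$ one by one, relying on standard duality facts together with the assumption that $G$ is $3$-connected and cubic.

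First I would recall that in the planar dual, faces of $G^*$ correspond to vertices of $G$, and the size of a face of $G^*$ equals the degree of the corresponding vertex of $G$. Since $G$ is cubic, every face of $G^*$ has size $3$, so $G^*$ is a triangulation of the sphere. Dually, vertices of $G^*$ correspond to faces of $G$, and $d_{G^*}(f^*) = d_G(f)$ (as already noted in the excerpt). The hypothesis that every face of $G$ has size at most $6$ then immediately gives $d_{G^*}(f^*) \le 6$ for every vertex $f^* \in V(G^*)$.

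For the lower bound $d_{G^*}(f^*) \ge 3$, I need to rule out faces of $G$ of size $1$ or $2$. A face of size $1$ would arise from a loop, which is excluded because $G$ is simple; a face of size $2$ would arise from a pair of parallel edges, again excluded by simplicity. (Alternatively, $3$-connectedness and cubicness already force face sizes to be at least $3$.)

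Finally, for the simplicity of $G^*$, I would use that a $3$-connected cubic graph is $3$-edge-connected. A loop in $G^*$ at $f^*$ corresponds to an edge of $G$ whose two sides both lie in the face $f$, i.e.\ a bridge of $G$; but $3$-edge-connectedness forbids bridges. A pair of parallel edges in $G^*$ between $f^*$ and $g^*$ corresponds to two distinct faces $f,g$ of $G$ sharing two edges, which yields a $2$-edge-cut of $G$ separating the two boundary arcs, again contradicting $3$-edge-connectedness. Hence $G^*$ is simple. The only ``step'' that is not completely mechanical is the parallel-edges case, where one must argue carefully that two common edges between two faces really do form an edge cut; this is the main (very mild) obstacle, and it follows by a standard Jordan curve argument in the plane.
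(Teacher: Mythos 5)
Your proof is correct: the paper states this lemma without proof (``an easy exercise, which we leave to the reader''), and your argument---face sizes of $G^*$ from cubicness, vertex degrees of $G^*$ from the face-size bound, and simplicity of $G^*$ from the cycle--bond duality together with the $3$-edge-connectivity implied by $3$-connectivity---is exactly the standard way to fill it in. No gaps worth noting; the parallel-edge case you flag is indeed handled by the fact that a $2$-cycle in $G^*$ dualises to a $2$-edge-cut in $G$.
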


We will use the following
important concept from combinatorial optimisation. Given a graph $G=(V,E)$ with a distinguished
set $T$ of vertices of even cardinality, a \emph{$T$-join} of $G$ is a subset $J \subseteq E$
such that $T$ is equal to the set of odd-degree vertices in $(V,J)$. The minimum size of a
$T$-join of $G$ is denoted by $\tau(G,T)$. When $T$ is the set of odd-degree vertices of $G$,
a $T$-join is known as a \emph{postman set}. A \emph{$T$-cut} is an edge cut $\delta(X)$ such
that $|T\cap X|$ is odd. A \emph{packing} of $T$-cuts is a disjoint collection
$\delta(\mathcal F)=\{\delta(X) \mid X \in \mathcal F\}$ of $T$-cuts of $G$; the maximum
size of a packing of $T$-cuts is denoted by $\nu(G,T)$.

A family of sets $\mathcal F$ is said to be \emph{laminar} if, for every pair
$X,Y \in \mathcal F$, either $X \subseteq Y$, $Y \subseteq X$, or $X \cap Y = \emptyset$.
A $T$-cut $\delta(X)$ is \emph{inclusion-wise minimal} if no $T$-cut is properly contained
in $\delta(X)$. For more information on $T$-joins
and $T$-cuts, the reader is referred to~\cite{CCPS98, LoPl86, Sch03}.

\section{Patches and moats}
\label{sec:patches}

From now on assume that $K$ is a triangulation of the sphere with all vertices of degree at most $6$.
We define a subcomplex $L \subseteq K$ to be a \emph{patch} if in the dual complex $K^*$, the faces
corresponding to $V(L)$ form a subcomplex homeomorphic to a disc. (Equivalently, one could say
that $L \subseteq K$ is a patch if $L$ is an induced, contractible subcomplex of $K$.)
A patch $L\subseteq K$ such that $c=\sum_{u \in V(L)}(6-d_K(u))$ is called a \emph{$c$-patch}.
We remark that a $c$-patch $L$ has combinatorial curvature $c$ if and only if all vertices in the
boundary $\partial L$ have degree $6$ in $K$.
If $u \in V(K)$ has degree $6-c$, and the set $X$ of vertices at distance at most $r$ from $u$
contains only vertices of degree $6$, then the $c$-patch $K[\{u\} \cup X]$ is denoted by $D_r(c)$.
The subcomplex of the dual complex $K^*$ formed by the faces in $V(D_r(c))$ is denoted by $D^*_r(c)$;
see Figure~\ref{fig:discs}.

\begin{figure}
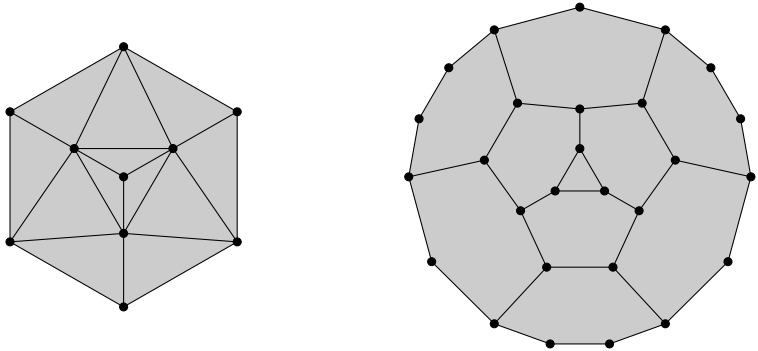

  \centering
  \begin{tikzgraph}[scale=0.75]
    \foreach\i in {1,...,3}
    {
      \path (120*\i-30:0.5) coordinate (a\i);
      \path (120*\i-30:1.2) coordinate (b\i);
    }
    \foreach\i in {1,3,5}
    {
      \path (60*\i+70:1.7) coordinate (c\i);
      \path (60*\i+60:3) coordinate (d\i);
    }
    \foreach\i in {2,4,6}
    {
      \path (60*\i+50:1.7) coordinate (c\i);
      \path (60*\i+60:3) coordinate (d\i);
    }
    \foreach\i in {1,4,7}
    {
      \path (40*\i+50:3) coordinate (e\i);
    }
    \foreach\i in {2,5,8}
    {
      \path (40*\i+60:3) coordinate (e\i);
    }
    \foreach\i in {3,6,9}
    {
      \path (40*\i+40:3) coordinate (e\i);
    }
    \draw[fill=black!20] (e1)--(d1)--(e2)--(e3)--(d2)--(e4)--(d3)--(e5)--(e6)--(d4)--(e7)--(d5)--(e8)--(e9)--(d6)--cycle;
    \draw (a1)--(a2)--(a3)--cycle
          (a1)--(b1)
          (a2)--(b2)
          (a3)--(b3)
          (b1)--(c1)--(c2)--(b2)--(c3)--(c4)--(b3)--(c5)--(c6)--cycle;
    \foreach\i in {1,...,6}
    {
      \draw (c\i)--(d\i);
    }
    \foreach\i in {1,...,3}
    {
      \draw (a\i) node[vertex] {};
      \draw (b\i) node[vertex] {};
    }
    \foreach\i in {1,...,6}
    {
      \draw (c\i) node[vertex] {};
      \draw (d\i) node[vertex] {};
    }
    \foreach\i in {1,...,9}
    {
      \draw (e\i) node[vertex] {};
    }
    \path (-8,0) coordinate (a);
    \foreach\i in {1,...,3}
    {
      \path (120*\i+30:1)+(-8,0) coordinate (b\i);
    }
    \foreach\i in {1,...,6}
    {
      \path (60*\i+30:2.3)+(-8,0) coordinate (c\i);
    }
    \draw[fill=black!20] (c1)--(c2)--(c3)--(c4)--(c5)--(c6)--cycle;
    \draw (a)--(b1)
          (a)--(b2)
          (a)--(b3)
          (b1)--(c1)
          (b1)--(c2)
          (b1)--(c3)
          (b2)--(c3)
          (b2)--(c4)
          (b2)--(c5)
          (b3)--(c5)
          (b3)--(c6)
          (b3)--(c1)
          (b1)--(b2)--(b3)--cycle;
    \draw (a) node[vertex] {};
    \foreach\i in {1,...,3}
    {
      \draw (b\i) node[vertex] {};
    }
    \foreach\i in {1,...,6}
    {
      \draw (c\i) node[vertex] {};
    }
  \end{tikzgraph}
  \caption{On the left, the $3$-patch $D_2(3)$ and on the right, the polygonal
  surface $D^*_2(3)$.}
\label{fig:discs}
\end{figure}

The following isoperimetric inequality follows from the work of
Justus~\cite[Theorem~3.2.3 and Table~3.1]{Jus07}\footnote{Gunnar Brinkmann~\cite{Bri17}
has pointed out an error in the statement and proof of \cite[Lemma~4.4]{Egg05}
on which \cite[Theorem~3.2.3]{Jus07} is based, but has sketched a different way to prove
\cite[Theorem~3.2.3]{Jus07}.}.

\begin{lemma}[Justus~\cite{Jus07}]
\label{lem:justus}
  Let $K^*$ be a polygonal surface homeomorphic to a disc, with all internal vertices of
  degree $3$ and with $n$ faces, all of size at most $6$. Let $c=\sum_{f \in F(K^*)}(6-d(f))$,
  and suppose that $c \leq 5$. Then
  \[
    |\partial K^*| \geq \sqrt{8(6-c)(n-1)+(6-c)^2}.
  \]
  Equality holds if $K^* \cong D^*_r(c)$, for some integer $r \geq 0$, and only
  if at most one face in $K^*$ has size less than $6$.
\end{lemma}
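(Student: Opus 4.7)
My plan is to dualise $K^*$ into its primal triangulated disc $L$, translate the stated inequality into a hexagonal-lattice isoperimetric bound on $L$, and prove that bound by induction on the number of triangles.

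Concretely, I would let $L$ be the triangulated disc whose vertices are in bijection with $F(K^*)$, so $|V(L)|=n$ and each $v\in V(L)$ carries an ambient degree $d_K(v)$ equal to the size of the corresponding face of $K^*$. The assumption that every internal vertex of $K^*$ has degree $3$ forces $d_K(v)=d_L(v)$ whenever $v\in\inter(L)$, while in general $d_K(v)\geq d_L(v)$. Euler's formula for the disc $L$ gives $F(L)=2n-2-|\partial L|$, and an edge--face degree count in $K^*$ combined with Euler's formula gives the key identity
\[
  |\partial K^*| \;=\; 2\,|\partial L| + (6-c).
\]
Substituting these two identities and simplifying, the target inequality becomes equivalent to the primal isoperimetric bound
\[
  |\partial L|^2 \;\geq\; (6-c)\, F(L),
\]
which for $c=0$ is a classical statement about discs in the hexagonal lattice.

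Next I would prove this primal inequality by induction on $F(L)$. The base cases, where $L$ has no interior vertices, reduce to elementary quadratic inequalities in $|\partial L|$ which hold under the hypothesis $c\leq 5$. For the inductive step, applying Lemma~\ref{lem:gauss-bonnet} to $L$ yields $\sum_{v\in\partial L}(4-d_L(v))=6-c_{\mathrm{int}}$, where $c_{\mathrm{int}}:=\sum_{v\in\inter L}(6-d_K(v))\leq c\leq 5$; this sum is therefore at least $1$, so some boundary vertex $v$ must satisfy $d_L(v)\leq 3$. I would peel $v$ together with its incident triangles, check that the result $L'$ remains a triangulated disc satisfying the hypotheses, apply the inductive hypothesis to $L'$, and close the induction by tracking how $|\partial L|$, $F(L)$ and $c$ change under the peeling.

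The hard part will be that the naive single-vertex peeling fails precisely for ``flat'' degree-$3$ boundary vertices with $d_K(v)=6$, which contribute no curvature to $c$ and leave the inductive inequality strictly weaker than what is needed to recover the bound for $L$. To close this gap one must either select a different reducible vertex (for instance one with $d_K(v)\leq 5$, which has to exist unless the boundary profile is very rigid), peel an entire corner fan at once, or combine the induction with an auxiliary boundary-profile inequality extracted from Gauss--Bonnet. The equality characterisation then follows by tracing which peeling choices force equality at every step: this rigidity forces all but at most one vertex of $L$ to have degree $6$ and constrains the boundary to be the round hexagonal cycle of some $D_r(c)$.
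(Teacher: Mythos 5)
Your dualisation step is correct and cleanly done: with $L$ the triangulated disc dual to $K^*$ one indeed has $\area{L}=2n-2-|\partial L|$ and $|\partial K^*|=2|\partial L|+(6-c)$, and substituting these shows the stated bound is equivalent to $|\partial L|^2\geq(6-c)\area{L}$, which is precisely Lemma~\ref{lem:isoperimetric}. Be aware, though, that this is the opposite of the paper's logical order: the paper proves Lemma~\ref{lem:justus} by checking the claimed bound against the exhaustive table of minimal boundary lengths in Justus's thesis (together with the direct computation $|\partial D^*_r(c)|=(6-c)(2r+1)$ and $n-1=(6-c)r(r+1)/2$ for the equality case), and only then deduces the primal inequality. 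You are instead proposing to reprove the cited result from scratch, which is legitimate but means the entire burden falls on your induction.

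That induction is where the proposal has a genuine and, as it stands, fatal gap. You correctly use Lemma~\ref{lem:gauss-bonnet} to find a boundary vertex $v$ with $d_L(v)\leq 3$, but you then observe yourself that peeling a flat degree-$3$ boundary vertex (one with $d_K(v)=6$) removes two triangles while leaving $|\partial L|$ and $c$ unchanged, so the inductive hypothesis only yields $|\partial L|^2\geq(6-c)(\area{L}-2)$, short of the target by $2(6-c)$. None of the three proposed remedies is carried out, and the first is simply unavailable in the hardest case: when $c=0$ every vertex satisfies $d_K(v)=6$, so there is no ``curved'' vertex to peel, and this is exactly the regime of the extremal discs $D_r(0)$ where the inequality is tight and the induction has no slack to absorb an error term. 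The other two remedies (peeling a whole corner fan, or an auxiliary boundary-profile inequality) require real work --- fan-peeling can pinch the disc when the far edges of the removed triangles are chords already lying on the boundary, and the base cases with no interior vertices are not checked. Since this isoperimetric inequality is the entire content of the lemma, and the paper's own footnote records that a published proof of the underlying result contained an error, the argument cannot be accepted with this step only sketched; the equality characterisation, which you derive ``by tracing which peeling choices force equality,'' inherits the same gap. To repair the proposal, either complete the peeling argument in full (handling the flat degree-$3$ case and the topological degeneracies explicitly), or revert to citing Justus's Table~3.1 as the paper does.
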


\begin{proof}
  The minimum possible values of $|\partial K^*|$ are given in~\cite[Table~3.1]{Jus07},
  for all possible numbers of hexagonal, pentagonal, square, and triangular faces.
  In each case, our bound is satisfied. Moreover, it can be checked that equality
  holds only if at most one face in $K^*$ has size less than $6$. Finally, if
  $K^* \cong D^*_r(c)$, then it can be shown that $|\partial K^*|=(6-c)(2r+1)$ and
  $f-1=(6-c)r(r+1)/2$. Hence, $|\partial K^*| = \sqrt{8(6-c)(f-1)+(6-c)^2}$.
\end{proof}

We can use Lemma~\ref{lem:justus} to deduce the following isoperimetric inequality
for triangulations. Certain special cases of the inequality were already proved by
Justus~\cite{Jus07}.

\begin{lemma}
\label{lem:isoperimetric}
  Let $K$ be a triangulation of the sphere with all vertices of degree at most $6$,
  and let $L \subseteq K$ be a patch of combinatorial curvature $c \leq 5$. Then
  \[
    |\partial L| \geq \sqrt{(6-c)\:\area L}.
  \]
  Equality holds if $L \cong D_r(c)$, for some integer $r \geq 0$, and only
  if at most one vertex in $\inter L$ has degree less than $6$.
\end{lemma}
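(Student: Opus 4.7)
The plan is to reduce the lemma to Lemma~\ref{lem:justus} by dualising. First I would check that $L^*$ satisfies its hypotheses: by definition of a patch, $L^*$ is homeomorphic to a disc; its internal vertices have degree $3$ since every vertex of $K^*$ has degree $3$; each face is $u^*$ for $u\in V(L)$, of size $d_K(u)\le 6$; and its total face curvature is $\sum_{u\in V(L)}(6-d_K(u))=c\le 5$. Writing $V:=|V(L)|$, Lemma~\ref{lem:justus} then yields
\[
|\partial L^*| \;\ge\; \sqrt{8(6-c)(V-1)+(6-c)^2}.
\]

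Next I would translate both sides back to primal quantities. Assume first that $L$ has no dangling edges, so $|L|$ is an honest disc; contractibility gives $V-|E(L)|+\area{L}=1$, and the triangle-edge double count gives $3\area{L}=2|E(L)|-|\partial L|$, which combine into $V-1=\tfrac12(\area{L}+|\partial L|)$. Moreover $|\partial L^*|$ counts the primal edges with exactly one endpoint in $V(L)$, so $|\partial L^*|=\sum_{u\in V(L)}d_K(u)-2|E(L)|=(6V-c)-2|E(L)|$; eliminating $|E(L)|$ and $V$ via the previous identities simplifies this neatly to $|\partial L^*|=2|\partial L|+6-c$.

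Plugging these into the Justus bound and squaring (with $s:=6-c$) gives
\[
(2|\partial L|+s)^2 \;\ge\; s\bigl(4\area{L}+4|\partial L|+s\bigr),
\]
from which the $4s|\partial L|$ and $s^2$ terms cancel on both sides, leaving the desired $|\partial L|^2 \ge s\cdot\area{L}$. For equality I would argue as follows: if $L\cong D_r(c)$ then direct computation yields $|\partial L|=(6-c)r$ and $\area{L}=(6-c)r^2$, giving equality; conversely, equality forces the Justus bound on $L^*$ to be tight, whose equality clause says at most one face of $L^*$ has size less than $6$, i.e., at most one vertex of $\inter L$ has degree less than $6$.

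The step I expect to need the most care is the ``no dangling edges'' assumption invoked in the double-counting identity. Without it, $3\area{L}\le 2|E(L)|-|\partial L|$ becomes strict and the clean cancellation at the end is weakened to $(|\partial L|+p_0)^2 \ge s\cdot\area{L}$, where $p_0$ is the number of edges of $L$ incident to no triangle. I would handle this by an inductive pruning argument: removing a leaf dangling edge $uz$ (with $z$ of degree $1$ in $L$) decreases $|\partial L|$ by exactly one and $c$ by $6-d_K(z)\ge 0$, while preserving $\area{L}$ and the patch property; so the inequality for the pruned patch implies the inequality for $L$ with strict slack, and we may reduce to the honest-disc case.
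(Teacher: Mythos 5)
Your proposal is correct and follows essentially the same route as the paper: dualise, apply Lemma~\ref{lem:justus} to $L^*$, and translate the bound back to $L$. The only difference is that the paper cites the two translation identities from Justus's thesis while you rederive them from Euler's formula and double counting; your versions, $2(|V(L)|-1)=\area{L}+|\partial L|$ and $|\partial L^*|=2|\partial L|+(6-c)$, are in fact the correct ones (checking on $D_1(c)$ shows the signs in the paper's \eqref{eq:2} and \eqref{eq:3} are off, though the two slips cancel in the final computation). One caveat on the degenerate case you rightly flag: a dangling edge of a patch need not have a degree-one endpoint (two triangulated discs joined by a single bridge edge can still be an induced contractible subcomplex), so leaf-pruning alone does not dispose of all such edges; this corner case is left untreated in the paper's proof as well.
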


\begin{proof}
  Put $n=|V(K)|$, and let $L^*$ be the subcomplex of $K^*$ formed by the faces
  corresponding to $V(L)$. By Lemma~\ref{lem:justus},
  \begin{equation}\label{eq:1}
    |\partial L^*| \geq \sqrt{8(6-c)(n-1)+(6-c)^2}.
  \end{equation}
  Moreover, the following two equalities were shown by Justus~\cite[equations (3.8) and (3.11)]{Jus07}
  \begin{equation}\label{eq:2}
    2(n-1)=\area L-|\partial L|,
  \end{equation}
  \begin{equation}\label{eq:3}
    |\partial L|^2 = \tfrac14|\partial(L^*)|^2+(6-c)|\partial L|-\tfrac14(6-c)^2.
  \end{equation}
  So, combining \eqref{eq:1}, \eqref{eq:2} and \eqref{eq:3} gives
  \begin{equation}\label{eq:4}
    |\partial L|^2 \geq (6-c)\:\area L.
  \end{equation}
  
  Equality holds in~\eqref{eq:4} if and only if equality holds in~\eqref{eq:1}. The
  latter is true only if at most one face in $L^*$ has size less than $6$, or equivalently,
  only if at most one vertex in $\inter L$ has degree less than $6$. For the final
  part, it is enough to note that if $L \cong D_r(c)$, then $L^* \cong D^*_r(c)$, so
  equality holds in~\eqref{eq:1} and therefore in~\eqref{eq:4}.
\end{proof}

Let $L \subseteq K$ be a patch. A \emph{moat} of width $1$ in $K$ surrounding $L$
is the set $\Mt^1(L)$ of all the faces in $F(K)\setminus F(L)$ with at
least one vertex in $V(K)$. More generally, we can define a moat of width $w$ in $K$ surrounding
$L$ recursively as $\Mt^w(L)=\Mt^1(\Mt^{w-1}(L)\cup L)$. With a slight abuse of notation,
$\Mt^w(L)$ will also denote the subcomplex of $K$ formed by the faces in $\Mt^w(L)$.
If $L$ is a $c$-patch, then $\Mt^w(L)$ is a \emph{$c$-moat} of width $w$ surrounding $L$.
See Figure~\ref{fig:patch-moat} for an example of a moat.

\begin{figure}
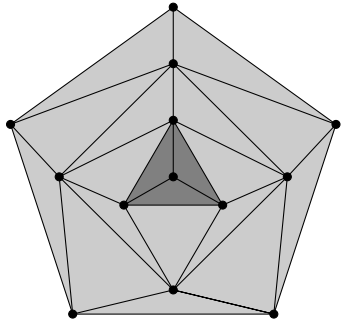

  \centering
  \begin{tikzgraph}[scale=0.75]
    \path (90:1) coordinate (a1);
    \path (210:1) coordinate (a2);
    \path (330:1) coordinate (a3);
    \path (90:2) coordinate (b1);
    \path (180:2) coordinate (b2);
    \path (270:2) coordinate (b3);
    \path (0:2) coordinate (b4);
    \foreach\i in {1,...,5}
    {
      \path (72*\i+18:3) coordinate (c\i);
    }
    \draw[fill=black!20] (c1)--(c2)--(c3)--(c4)--(c5)--cycle;
    \draw[fill=black!50] (a1)--(a2)--(a3)--cycle;
    \draw (0,0)--(a1)
          (0,0)--(a2)
          (0,0)--(a3)
          (a1)--(b1)
          (a1)--(b2)
          (a2)--(b2)
          (a2)--(b3)
          (a3)--(b3)
          (a3)--(b4)
          (a1)--(b4)
          (c1)--(b1)--(c2)--(b2)--(c3)--(b3)--(c4)
          (b3)--(c4)--(b4)--(c5)--(b1);
    \draw (b1)--(b2)--(b3)--(b4)--cycle;
    \draw (0,0) node[vertex] {};
    \foreach\i in {1,...,3}
    {
      \draw (a\i) node[vertex] {};
    }
    \foreach\i in {1,...,4}
    {
      \draw (b\i) node[vertex] {};
    }
    \foreach\i in {1,...,5}
    {
      \draw (c\i) node[vertex] {};
    }
  \end{tikzgraph}
  \caption{A $5$-patch $L$ of area $3$ (shaded
  in dark grey) surrounded by a $5$-moat $\Mt^2(L)$ of width $2$ and area $16$
  (shaded in light grey).}
\label{fig:patch-moat}
\end{figure}

Under certain conditions, the area of a $c$-moat $\Mt^w(L)$ can be bounded
in terms of $c$, $w$, and $\area L$.

\begin{lemma}
\label{lem:perimeter}
  Let $K$ be a triangulation of the sphere with maximum degree at most $6$,
  and suppose $L \subseteq K$ is a $c$-patch, for some $0<c<6$.
  If $L\cup \Mt^i(L)$ is a $c$-patch, for every $0\leq i \leq w-1$, then
  \[
    \area{\Mt^w(L)} \geq (6-c)w^2+2w\sqrt{(6-c)\:\area L}.
  \]
  Equality holds if $L \cong D_r(c)$, for some integer $r \geq 0$, and only
  if at most one vertex in $\inter L$ has degree less than $6$.
\end{lemma}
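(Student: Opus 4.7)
Write $L_i = L \cup \Mt^i(L)$ for $0 \le i \le w$, set $a_i = \area{L_i}$ and $b_i = |\partial L_i|$, and note that by hypothesis each $L_i$ with $0 \le i \le w-1$ is a $c$-patch, so Lemma~\ref{lem:isoperimetric} yields $b_i^2 \ge (6-c)\,a_i$. Since $\area{\Mt^w(L)} = a_w - a_0$, after squaring and rearranging the desired bound is equivalent to
\[
\sqrt{(6-c)\,a_w} \;\ge\; (6-c)w + \sqrt{(6-c)\,a_0},
\]
which I would prove by induction on $w$.

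The whole induction hinges on the following claim: for every $c$-patch $P \subseteq K$, $\area{\Mt^1(P)} \ge (6-c) + 2|\partial P|$, with equality if and only if no face of $\Mt^1(P)$ has all three vertices in $\partial P$. To prove the claim I would run a two-part double count. First, applying Lemma~\ref{lem:gauss-bonnet} to $P$, and using $d_P(v) = d_K(v)$ for $v \in \inter P$ together with the $c$-patch identity $\sum_{v \in V(P)}(6-d_K(v)) = c$, one obtains $\sum_{v \in \partial P}(d_K(v) - d_P(v)) = (6-c) + 2|\partial P|$. Since each $v \in \partial P$ is incident to exactly $d_K(v) - d_P(v) + 1$ faces of $\Mt^1(P)$, summing produces
\[
m_1 + 2m_2 + 3m_3 \;=\; (6-c) + 3|\partial P|,
\]
where $m_k$ denotes the number of faces of $\Mt^1(P)$ with exactly $k$ vertices in $\partial P$. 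Second, the edges of $K$ with both endpoints in $\partial P$ that lie on some moat face are precisely the boundary edges of $P$ (each on one moat face) and the ``outside chords'' of $\partial P$ in $K$ (each on two); since a patch is by definition an induced subcomplex, outside chords cannot exist, and counting such edges both ways gives $m_2 + 3m_3 = |\partial P|$. Subtracting the two identities yields $m_1 + m_2 = (6-c) + 2|\partial P|$, and therefore $\area{\Mt^1(P)} = m_1 + m_2 + m_3 = (6-c) + 2|\partial P| + m_3$, proving the claim.

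The base case $w = 1$ of the induction follows from applying the claim with $P = L$ and invoking Lemma~\ref{lem:isoperimetric}. For the step $w-1 \to w$, I would apply the claim to $P = L_{w-1}$ (a $c$-patch by hypothesis) and combine with $b_{w-1} \ge \sqrt{(6-c)\,a_{w-1}}$ to obtain $a_w \ge a_{w-1} + (6-c) + 2\sqrt{(6-c)\,a_{w-1}}$; multiplying through by $6-c$ reveals a complete square, giving $\sqrt{(6-c)\,a_w} \ge (6-c) + \sqrt{(6-c)\,a_{w-1}}$, and the inductive hypothesis then yields the required bound. For the equality statement, if $L \cong D_r(c)$ then $L_i \cong D_{r+i}(c)$ at every stage, and both the moat growth claim (with $m_3 = 0$) and Lemma~\ref{lem:isoperimetric} are tight throughout the induction; conversely, equality in the lemma forces Lemma~\ref{lem:isoperimetric} to be tight on $L_{w-1}$, whose equality clause then forces at most one interior vertex of $L_{w-1}$, and hence of $L$ (since $\inter L \subseteq \inter L_{w-1}$), to have degree less than $6$.

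The main technical obstacle is the double-counting argument behind the moat growth claim, and more specifically the identity $m_2 + 3m_3 = |\partial P|$: this rests on the induced-subcomplex clause in the definition of a patch, without which ``outside chords'' would contribute an extra $+2k$ term and spoil the clean bound.
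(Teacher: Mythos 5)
Your proof is correct, and it reaches the same one\-/layer identity as the paper --- namely $\area{\Mt^1(P)}=2|\partial P|+(6-c)$ for a $c$-patch $P$, which is the paper's equation \eqref{eq:boundary} --- but by a different route and with a different iteration scheme. For the identity itself, the paper gets it in two lines from a degree-sum manipulation plus Lemma~\ref{lem:gauss-bonnet}, whereas you do an explicit double count of moat faces by their number of boundary vertices; note that since patches are induced subcomplexes, $m_3=0$ automatically (a face with all three vertices in $V(P)$ would lie in $P$), so your ``claim'' is in fact always an equality and its equality clause is vacuous --- consistent with the paper. The real divergence is in the iteration. The paper observes that $K\setminus L$ is a $(12-c)$-patch and applies \eqref{eq:boundary} to both $L$ and its complement to deduce the exact perimeter growth $|\partial(L\cup\Mt^1(L))|=|\partial L|+6-c$; telescoping gives the exact formula $\area{\Mt^w(L)}=2w|\partial L|+(6-c)w^2$, and the isoperimetric inequality (Lemma~\ref{lem:isoperimetric}) is invoked exactly once, on $L$ itself. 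You instead invoke Lemma~\ref{lem:isoperimetric} on every intermediate patch $L_{i}$ and telescope a chain of perfect-square inequalities $\sqrt{(6-c)a_w}\ge\sqrt{(6-c)a_{w-1}}+(6-c)$. Both are valid; the paper's version buys the stronger exact area formula and makes the equality analysis reduce to tightness of the isoperimetric inequality on $L$ alone, while your version needs tightness at every layer --- which you handle correctly, via $L_i\cong D_{r+i}(c)$ for the ``if'' direction and via $\inter L\subseteq\inter L_{w-1}$ for the ``only if'' direction. One small caveat, shared with the paper's own argument: your count of moat faces at a boundary vertex $v$ (namely $d_K(v)-d_P(v)+1$) and your claim that each boundary edge lies on exactly one moat face both rely on the faces of $P$ at each boundary vertex forming a single contiguous fan, which is guaranteed by the disc structure of a patch but is worth stating.
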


\begin{proof}
  As $L$ is contractible, its Euler characteristic is $\chi(L)=1$.
  We have
  \begin{align*}\label{eq:boundary}
    c &= \sum_{u \in V(L)}(6-d_{K}(u))\\  
      &= \sum_{u \in \inter L}(6-d_L(u))+\sum_{u \in \partial L}(6-d_L(u))-\area{\Mt^1(L)}\\
      &= \sum_{u \in \inter L}(6-d_L(u))+\sum_{u \in \partial L}(4-d_L(u))+2|\partial L|-\area{\Mt^1(L)}.
  \end{align*}
  Hence, by Lemma~\ref{lem:gauss-bonnet},
  \begin{equation}\label{eq:boundary}
    2|\partial L|+6-c = \area{\Mt^1(L)}.
  \end{equation}
  
  The dual complex $K^*$ is homeomorphic to the sphere and the subcomplex $L^*$
  formed by the faces corresponding to $V(L)$ is homeomorphic to a disc, so by the
  Jordan--Schoenflies theorem, the subcomplex formed by the faces corresponding
  to $V(K)\setminus V(L)$ is also homeomorphic to a disc. Hence, $K \setminus L$ is
  also a patch. Moreover, $K$ has Euler characteristic $\chi(K)=2$, so by
  Lemma~\ref{lem:gauss-bonnet}, $\sum_{u \in V(K)}(6-d(u))=12$. Therefore,
  $\sum_{u \in V(K\setminus L)}(6-d_K(u))=12-c$, i.e., $K \setminus L$ is a
  $(12-c)$-patch. Applying~\eqref{eq:boundary} to $L$ and to
  $K\setminus L$,
  \begin{align*}
    2|\partial L|+6-c &= \area{\Mt^1(L)} \\
                      &= \area{\Mt^1(K\setminus L)} \\
                      &= 2|\partial(K\setminus L)|+6-(12-c).
  \end{align*}
  Hence, $|\partial(L \cup \Mt^1(L))|=|\partial(K\setminus L)|=|\partial L|+6-c$, so by
  induction, and the fact that $L \cup \Mt^i(L)$ is a patch for all $0 \leq i \leq w-1$,
  \begin{equation}\label{eq:Mt^i(L)}
    |\partial(L \cup \Mt^i(L))|=|\partial L|+(6-c)i.
  \end{equation}
  By~\eqref{eq:boundary} and~\eqref{eq:Mt^i(L)},
  \begin{align*}
    \area{\Mt^1(L \cup \Mt^i(L))} &= 2|\partial(L \cup \Mt^i(L))|+6-c \\
                                  &= 2(|\partial L|+(6-c)i)+6-c \\
                                  &= 2|\partial L|+(6-c)(2i+1),
  \end{align*}
  so the area of $\Mt^w(L)$ is
  \begin{align*}
    \area{\Mt^w(L)} &= \sum_{i=0}^{w-1}\area{\Mt^1\left(L \cup \Mt^i(L)\right)}\\
                    &= \sum_{i=0}^{w-1}\left(2|\partial L|+(6-c)(2i+1)\right)\\
                    &= 2w|\partial L|+(6-c)w^2.
  \end{align*}
  The combinatorial curvature of $L$ is at most $c$, so by Lemma~\ref{lem:isoperimetric},
  \[
    |\partial L| \geq \sqrt{(6-c)\:\area L},
  \]
  with equality if $L \cong D_r(c)$, for some integer $r \geq 0$, and only
  if at most one vertex in $\inter L$ has degree less than $6$.
\end{proof}

\section{Packing odd cuts in triangulations of the sphere with maximum degree at most $6$}
\label{sec:T-cut}

We now relate certain special types of packings of $T$-cuts to packings of
$1$-, $3$- and $5$-moats.

\begin{lemma}
\label{lem:moat-packing}
  Let $K$ be a triangulation of the sphere with all vertices of degree at most $6$,
  and let $T$ be the set of odd-degree vertices in $K$. There exists a family
  $\mathcal F$ on $V(K)$ and a vector $w \in \NN^{|\mathcal F|}$ with the following
  properties.
  \begin{enumerate}[label=\textnormal{(M\arabic*)}]
    \item \label{M:packing} $\mathcal M=\bigcup_{X \in \mathcal F}\Mt_K^{w_X}(X)$ is a packing of
      moats in $K$;
    \item \label{M:width} The total width of $\mathcal M$ is $\sum_{X \in \mathcal F}w_X=\nu(K,T)$;
    \item \label{M:patch} For every $X \in \mathcal F$, the subcomplex $K[X]$ is a patch;
    \item \label{M:1-3-5} Every $\Mt^{w_X}(X) \in \mathcal M$ is a $1$-, $3$-, or $5$-moat in $K$;
    \item \label{M:minimal} If $X$ is an inclusion-wise minimal element in $\mathcal F$, then $|X|=1$;
    \item \label{M:laminar} $\mathcal F$ is laminar.
  \end{enumerate}
\end{lemma}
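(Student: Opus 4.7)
The plan is to start from a maximum laminar packing of inclusion-wise minimal $T$-cuts of $K$ and then organise its members into concentric rings (moats) around carefully chosen patches. First, I would invoke a classical result in the theory of $T$-joins and $T$-cuts (see, e.g.,~\cite{Sch03}) guaranteeing that $\nu(K,T)$ is attained by some packing $\{\delta(Y) : Y \in \mathcal{Y}\}$ of inclusion-wise minimal $T$-cuts with $\mathcal{Y}$ laminar. Since $K$ is a triangulation of the sphere, the Jordan--Schoenflies theorem applied to the dual $K^*$ shows that for each $Y \in \mathcal{Y}$ one of $K[Y]$, $K \setminus K[Y]$ is a patch; swapping $Y$ with $V(K)\setminus Y$ whenever necessary, I may assume that $K[Y]$ is always a patch, securing (M3) and preserving (M6).

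Next I would establish the curvature restriction (M4). A direct double count gives
\[
  c(X) \,:=\, \sum_{u \in X}(6-d_K(u)) \,\equiv\, |X \cap T| \pmod 2,
\]
so every $T$-cut corresponds to an odd-curvature patch. By Lemma~\ref{lem:gauss-bonnet} the total curvature of $K$ is $12$, so the complementary patch has curvature $12-c(X)$; by again swapping $Y$ with its complement whenever $c(Y) > 5$, I can force $c(K[Y]) \in \{1,3,5\}$ for every $Y \in \mathcal{Y}$.

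Finally, I would bundle each maximal chain of $\mathcal{Y}$ into a single moat. Fix a chain $Y_1 \subsetneq \cdots \subsetneq Y_k$ in $\mathcal{Y}$ with no member of $\mathcal{Y}$ strictly between consecutive $Y_i$. Using the boundary-growth identity $\area{\Mt^1(L)} = 2|\partial L| + 6 - c$ from the proof of Lemma~\ref{lem:perimeter}, edge-disjointness of the cuts together with the planar structure forces the region between consecutive inclusion-wise minimal $T$-cuts in the chain to decompose into a sequence of unit-width moat layers. I would refine the packing so that each $Y_i$ takes the form $X \cup \Mt^{k_i}(X)$ for a common base $X \subseteq Y_1$, choosing $X$ to be a single odd-degree vertex of $Y_1$ (which exists since $c(K[Y_1])$ is odd); then place $X$ in $\mathcal{F}$ and let $w_X$ be the total number of unit layers contributed by the chain. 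This delivers (M1), (M2), and (M5), while laminarity (M6) is inherited from $\mathcal{Y}$.

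The main obstacle will be the refinement step: showing that a maximum laminar packing of $T$-cuts can always be rearranged so that every chain decomposes into concentric moats around a single odd-degree vertex, without decreasing $\nu(K,T)$. This will require an exchange argument exploiting both the planarity of $K$ and the fact that, thanks to the maximum degree hypothesis, inclusion-wise minimal $T$-cuts nested within a patch of curvature at most $5$ are highly constrained in shape.
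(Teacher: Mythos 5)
Your setup coincides with the paper's: you start from a maximum packing of inclusion-wise minimal $T$-cuts, use the Jordan--Schoenflies theorem in the dual to get (M3), and use the parity identity $\sum_{u\in X}(6-d(u))\equiv|X\cap T|\pmod 2$ together with total curvature $12$ and complementation to get curvatures in $\{1,3,5\}$, i.e.\ (M4). (One small simplification you miss: in a triangulation you do not need to invoke a classical uncrossing result for (M6) --- any two edge-disjoint minimal $T$-cuts are automatically non-crossing, because crossing vertex sets would share a triangular face and each minimal cut meets a triangle in exactly $0$ or $2$ of its three edges, forcing a common edge.) The genuine gap is your final step, and it is not merely the ``main obstacle'' you flag but a step whose target statement is false. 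First, a laminar family is a forest, not a disjoint union of chains: a curvature-$5$ member $Y$ of the packing can contain several pairwise incomparable smaller members (e.g.\ three mutually distant $5$-vertices), so $Y$ lies on several maximal chains, and ``bundling each maximal chain into one moat'' either counts $\delta(Y)$ more than once (breaking (M2)) or discards cuts (breaking maximality). Second, even along a genuine chain you cannot re-centre all cuts as concentric rings around a single odd-degree vertex $u$: a ring $\delta(\{u\}\cup\Mt^j(u))$ is a $T$-cut only when the enclosed ball contains an odd number of $T$-vertices, and in a chain whose curvatures increase $1\to 3\to 5$ the enclosed parity is sustained precisely because further odd-degree vertices enter at those steps; forcing concentricity around $u$ destroys either the $T$-cut property or disjointness from the rest of the packing. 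Indeed, if every optimal packing could be normalised to rings around single vertices, the entire $r,s,t$ bookkeeping in the proof of Theorem~\ref{thm:packing} would be unnecessary.

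What the paper does instead is weaker and achievable: it keeps the whole optimal family $\mathcal F'$ and groups nested cuts by their \emph{curvature value}. The family $\mathcal F$ consists of those $X\in\mathcal F'$ that are minimal within their curvature class, $\mathcal F'_X$ collects the members above $X$ with the same curvature, and $w_X=|\mathcal F'_X|$; then $\Mt^{w_X}(X)=\bigcup_{Y\in\mathcal F'_X}\Mt^1(Y)$, and disjointness of the resulting moats is proved by the same shared-triangle argument as laminarity. Crucially, (M5) demands singletons only for the inclusion-wise \emph{minimal} members of $\mathcal F$ --- the bases of $3$- and $5$-moats higher up a chain are allowed to be large patches. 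That single-vertex property is obtained not by re-centring but by a secondary optimisation: among optimal packings satisfying (P1)--(P4) choose one minimising $\sum_{X\in\mathcal F'}|X|$, and observe that a minimal $X$ with $|X|>1$ could be replaced by any odd-degree vertex $u\in X$ (laminarity and edge-disjointness guarantee $\delta(\{u\})$ avoids all other cuts), contradicting minimality. You should replace your re-centring plan by this curvature-class grouping plus exchange argument.
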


\begin{proof}
  Consider a packing $\delta(\mathcal F')$ of inclusion-wise minimal $T$-cuts in $K$ of size
  of $\nu(K,T)$. Note that $\sum_{u \in X}(6-d_K(u))$ is odd, for every $X \in \mathcal F$.
  Since $\sum_{u \in X}(6-d_K(u))=12$ and $\delta(X)=\delta(V(K)\setminus X)$, we can assume that
  $\sum_{u \in X}(6-d_K(u))\leq 5$; otherwise we could replace $X$ by $V(K)\setminus X$ in
  $\delta(\mathcal F')$. Finally, we can also assume that,
  subject to the above conditions, $\mathcal F'$ minimises $\sum_{X \in \mathcal F'} |X|$.
  
  We remark that $\mathcal F'$ is a laminar family. Indeed, suppose that $X,Y \in \mathcal F'$,
  $X \cap Y \neq \varnothing$, $X \not\subseteq Y$ and $Y \not\subseteq X$. Then
  $\Mt^1(X)\cap \Mt^1(Y)\neq \varnothing$, so there is a face $\{u,v,w\}$ of $K$ in
  $\Mt^1(X)\cap \Mt^1(Y)$. Since
  \[
    |\delta(X)\cap \{uv,uw,vw\}|=|\delta(Y)\cap \{uv,uw,vw\}| = 2,
  \]
  it follows that $\delta(X)\cap \delta(Y)\neq \varnothing$, contradicting the fact that
  $\mathcal F'$ is a packing of $T$-cuts. Hence, $\mathcal F'$ is laminar.

  We summarise the properties of the family $\mathcal F'$ below.
  \begin{enumerate}[label=\textnormal{(P\arabic*)}]
    \item \label{P:packing} $\delta(\mathcal F')$ is a packing of $T$-cuts;
    \item \label{P:size} $|\mathcal F'|=\nu(K,T)$;
    \item \label{P:inclusion-wise-minimal} $\delta(X)$ is an inclusion-wise minimal cut,
      for every $X \in \mathcal F'$;
    \item \label{P:1-3-5} $\sum_{u \in X}(6-d_K(u)) \in \{1,3,5\}$ for all $X \in \mathcal F'$;
    \item \label{P:minimal} Subject to \ref{P:packing}--\ref{P:1-3-5}, $\mathcal F'$
      minimises $\sum_{X \in \mathcal F'} |X|$;
    \item \label{P:laminar} $\mathcal F'$ is laminar.
  \end{enumerate}
  
  We let $\mathcal F$ be the subfamily of $\mathcal F'$ consisting of the
  elements $X \in \mathcal F'$ such that
  \[
    \sum_{u \in Y}(6-d_K(u))<\sum_{u \in X}(6-d_K(u)),
  \]
  for every $Y \in \mathcal F$ such that $Y \subseteq X$.
  For each $X \in \mathcal F$, let
  \[
    \mathcal F'_X=\{Y \in \mathcal F' : X \subseteq Y, \sum_{u \in Y}(6-d_K(u))=\sum_{u \in X}(6-d_K(u))\},
  \]
  and let $w_X=|F'_X|$.
  
  To prove~\ref{M:packing}, we use an argument very similar to the one we used to
  prove~\ref{P:laminar}.
  Clearly, for every $X \in \mathcal F$, $\Mt^{w_X}(X)=\bigcup_{Y \in \mathcal F'_X} \Mt^1(Y)$
  is a moat around $X$ of width $w_X$. Let $X,Y \in \mathcal F$, and suppose that
  $\Mt^{w_X}(X) \cap \Mt^{w_Y}(Y) \neq \varnothing$. Then there exists a face
  $\{u,v,w\} \in F(K)$ and sets $X',Y' \in \mathcal F'$ such that $X \subseteq X'$,
  $Y \subseteq Y'$, and
  \[
    |\delta(X')\cap \{uv,uw,vw\}|=|\delta(Y')\cap \{uv,uw,vw\}| = 2.
  \]
  But then $\delta(X') \cap \delta(Y') \neq \varnothing$, so by~\ref{P:packing},
  $X'=Y'$. Hence, by the construction of $\mathcal F$, $X=Y$. This proves~\ref{M:packing}.
  
  To prove~\ref{M:width}, it suffices to note that $\sum_{X \in \mathcal F}w_X=|\mathcal F'|=\nu(K,T)$
  by~\ref{P:size}.
  
  The property~\ref{M:patch} follows immediately from~\ref{P:inclusion-wise-minimal};
  indeed, since $\delta(X)$ is
  an inclusion-wise minimal cut, the dual edges form a cycle, so by the
  Jordan--Schoenflies theorem, the subcomplex of $K^*$ formed by the faces in
  $X$ is homeomorphic to a disc, so $K[X]$ is a patch.

  Since $\mathcal F \subseteq \mathcal F'$, \ref{M:1-3-5} follows immediately
  from~\ref{P:1-3-5} and ~\ref{M:laminar} follows immediately from~\ref{P:laminar}.
  
  To prove~\ref{M:minimal}, let $X$ be an inclusion-wise minimal element
  of $\mathcal F$. By the definition of $\mathcal F$, $X$ is also an inclusion-wise
  minimal element of $\mathcal F'$. Since $\sum_{u \in X}d(u)$ is odd, at least one
  vertex in $X$ has odd degree. If $|X|>1$, let $u$ be a vertex of odd degree in $X$,
  and let $\mathcal F''=(\mathcal F'\setminus X) \cup \{u\}$. Then $\mathcal F''$ satisfies
  \ref{P:packing}--\ref{P:1-3-5}, but $\sum_{X \in \mathcal F''} |X|<\sum_{X \in \mathcal F'} |X|$,
  contradicting~\ref{P:minimal}.
\end{proof}

Lemmas~\ref{lem:perimeter} and~\ref{lem:moat-packing} can be used to prove
the following upper bound on the maximum size of a packing of odd cuts in
spherical triangulations with all vertices of degree at most $6$,
which may be of independent iterest. By taking the planar dual, we also get
an upper bound on $\nu_{\odd}$ for the class of $3$-connected cubic plane
graphs with all faces of size at most $6$.

\begin{theorem}\label{thm:packing}
  Let $K$ be a triangulation of the sphere with maximum degree
  at most $6$. If $T$ is the set of odd-degree vertices of $K$, then
  \[
    \nu(K,T) \leq \sqrt{\tfrac 15\textstyle\sum_{u \in T}(6-d(u))\;\area K}.
  \]
  In particular, $\nu(K,T) \leq \sqrt{12\;\area K/5}$ always holds,
  with equality if and only if all vertices have degree $5$ and $6$, $\area K=60k^2$
  for some $k \in \NN$, and $\Aut(K) \cong I_h$.
\end{theorem}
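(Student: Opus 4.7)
The plan is to feed the laminar family of moats produced by Lemma~\ref{lem:moat-packing} into the isoperimetric estimate of Lemma~\ref{lem:perimeter}, and then to sum the resulting inequalities, exploiting the disjointness of the moats together with the nesting structure of $\mathcal F$.

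First I would invoke Lemma~\ref{lem:moat-packing} to obtain a laminar family $\mathcal F$ together with integer widths $\{w_X\}_{X \in \mathcal F}$ satisfying \ref{M:packing}--\ref{M:laminar}; in particular $\nu(K,T) = \sum_{X \in \mathcal F} w_X$, each $K[X]$ is a $c_X$-patch with $c_X \in \{1,3,5\}$, and the moats $\Mt^{w_X}(X)$ are pairwise disjoint subcomplexes of $K$. For every $X \in \mathcal F$, Lemma~\ref{lem:perimeter} gives
\[
  \area{\Mt^{w_X}(X)} \;\geq\; (6-c_X)\,w_X^2 + 2 w_X \sqrt{(6-c_X)\,\area K[X]},
\]
which on completing the square can be rewritten as
\[
  \area{K[X]} + \area{\Mt^{w_X}(X)} \;\geq\; \bigl(\sqrt{6-c_X}\,w_X + \sqrt{\area K[X]}\bigr)^2.
\]

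The next step is a geometric containment: whenever $Y \subsetneq X$ in $\mathcal F$, the moat $\Mt^{w_Y}(Y)$ is contained in the induced subcomplex $K[X]$. Indeed, any face of $\Mt^{w_Y}(Y)$ already has a vertex in $Y \subseteq X$; if it also had a vertex outside $X$ it would belong to $\Mt^{w_X}(X)$, contradicting \ref{M:packing}. Consequently $\area{K[X]} \geq \sum_Y \bigl(\area K[Y] + \area \Mt^{w_Y}(Y)\bigr)$ where $Y$ ranges over the immediate children of $X$ in $\mathcal F$, while the regions $K[X] \cup \Mt^{w_X}(X)$ over the maximal $X \in \mathcal F$ are pairwise disjoint inside $K$. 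Unfolding the squared inequality above inductively along chains of $\mathcal F$ (whose lengths are at most $3$, since $c_X$ is odd, strictly increases along a chain, and is bounded by $5$) and summing over the maximal elements yields a cumulative bound of the shape
\[
  \area K \;\geq\; \sum_{X \in \mathcal F}(6-c_X)\,w_X^2 \;+\; 2\sum_{X \in \mathcal F} w_X\sqrt{(6-c_X)\,\area K[X]}.
\]

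To convert this into the desired $5\,\nu(K,T)^2 \leq \bigl(\sum_{u \in T}(6-d(u))\bigr)\area K$, I would combine the cumulative bound with a Cauchy--Schwarz-type inequality calibrated to $6-c_X \in \{1,3,5\}$, using the cross terms to absorb the loss that the naive Cauchy--Schwarz estimate would otherwise produce on long chains. The equality statement will then follow by chasing the equality cases of Lemmas~\ref{lem:perimeter} and~\ref{lem:moat-packing} together with the Gauss--Bonnet identity $\sum_u (6-d(u)) = 12$ supplied by Lemma~\ref{lem:gauss-bonnet}: equality forces $c_X = 1$ and $K[X] \cong D_0(1)$ for every $X \in \mathcal F$, hence $|\mathcal F|=12$ and every moat is isomorphic to $D_r^*(1) \setminus D_0^*(1)$ for a common radius $r \in \NN$, yielding $\area K = 60 k^2$ with $k = r+1$ and $\Aut(K) \cong I_h$ from the transitive action on the $12$ congruent discs around the $12$ degree-$5$ vertices. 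I expect the main obstacle to be precisely this final Cauchy--Schwarz step: applied to the quadratic part alone, $\sum_X (6-c_X) w_X^2 \leq \area K$, it is too weak when $\mathcal F$ contains chains of length greater than $1$ (since $\sum_X 1/(6-c_X)$ can exceed $c^*/5$), so the cross terms produced by Lemma~\ref{lem:perimeter} must be carefully retained and distributed along the chains in $\mathcal F$ to recover the sharp constant $1/5$.
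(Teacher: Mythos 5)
Your setup coincides with the paper's: Lemma~\ref{lem:moat-packing} supplies the laminar packing of $1$-, $3$- and $5$-moats of total width $\nu(K,T)$, Lemma~\ref{lem:perimeter} bounds the area of each moat, and the nesting of $\mathcal F$ is used to bound $\area{K[X]}$ from below by the moats it contains. But the step you yourself flag as ``the main obstacle''---converting the cumulative area bound into $5\,\nu(K,T)^2 \leq \bigl(\sum_{u\in T}(6-d(u))\bigr)\area K$---is precisely the heart of the proof, and you have not supplied it; announcing that you ``would combine the cumulative bound with a Cauchy--Schwarz-type inequality calibrated to $6-c_X$'' is a plan, not an argument, and the naive estimate you write down does not yield the constant $\tfrac15$. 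The paper's resolution is to introduce the weighted inner product $\langle x,y\rangle=\sum_{u\in T}(6-d(u))x_uy_u$ and width vectors $r,s,t$ (recording, for each $u\in T$, the widths of the $1$-, $3$- and $5$-moats around it), to prove
\[
m_1 = 5\|r\|^2,\qquad m_3 \geq \|s\|^2+2\sqrt5\,\langle r,s\rangle,\qquad m_5\geq \tfrac15\|t\|^2+2\langle r,t\rangle+\tfrac2{\sqrt5}\langle s,t\rangle,
\]
where the cross terms come from applying Cauchy--Schwarz \emph{inside} each patch $K[X]$ with the normalisation $\sum_{u\in T\cap X}(6-d(u))=c_X\in\{3,5\}$, and then to observe that the three bounds sum to the perfect square $\|\sqrt5\,r+s+\tfrac1{\sqrt5}t\|^2\leq\area K$; a final global Cauchy--Schwarz together with $\tfrac1{\sqrt5}\geq\tfrac13$ then delivers $\nu(K,T)=\langle r+\tfrac13 s+\tfrac15 t,1\rangle\leq\sqrt{\tfrac15\sum_{u\in T}(6-d(u))\area K}$. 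Without this (or an equivalent) computation your argument does not reach the stated bound.

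Two further points. First, your justification of the containment $\Mt^{w_Y}(Y)\subseteq K[X]$ is not quite right: a face in the $i$-th layer of a moat of width $w_Y>1$ need not have a vertex in $Y$ itself, only in $Y\cup\Mt^{i-1}(Y)$; the claim survives by induction on the layer, using laminarity, but as written your argument only covers width~$1$. Second, your equality analysis addresses only the ``only if'' direction. The ``if'' direction requires exhibiting, in \emph{every} triangulation with all degrees $5$ and $6$, $\area K=60k^2$ and $\Aut(K)\cong I_h$, a packing of $T$-cuts of size $\sqrt{12\;\area K/5}$; the paper does this via the Goldberg--Coxeter construction (the hypotheses force the Goldberg--Coxeter vector to be $(k,k)$, whence any two $5$-vertices are at distance at least $2k$ and the twelve $1$-moats $\Mt^k(u)$, $u\in T$, are pairwise disjoint). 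This direction is absent from your proposal.
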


\begin{proof}
  Let $\mathcal M=\bigcup_{X \in \mathcal F}\Mt_K^{w_X}(X)$ be a packing of $1$-, $3$- and
  $5$-moats in $K$ of total width $\sum_{X \in \mathcal F}w_X=\nu(K,T)$, as
  guaranteed by Lemma~\ref{lem:moat-packing}. Let $m_c$ be the total area of
  $c$-moats of $\bigcup_{X \in \mathcal F}\Mt_K^{w_X}(X)$, where
  $c\in\{1,3,5\}$. Define the incidence vectors $r, s, t \in \mathbb R^{|T|}$  as
  follows: for every $u \in T$, let $r_u$, $s_u$, $t_u$ be the width of the $1$-moat,
  $3$-moat and $5$-moat surrounding $u$, respectively.

  Define the inner product $\langle \cdot,\cdot \rangle$ on $\RR^{|T|}$ by
  $\left\langle x,y \right\rangle = \sum_{u\in T}(6-d(u)) x_uy_u$ and the norm
  $\|\cdot\|$ by $\|x\|=\left\langle x,x \right\rangle$. With this inner product,
  the total width of $1$-, $3$- and $5$-moats in
  $\bigcup_{X \in \mathcal F}\Mt_K^{w_X}(X)$ can be expressed as
  $\left\langle r,1\right\rangle$, $\tfrac13 \left\langle s,1\right\rangle$, and
  $\tfrac15 \left\langle t,1\right\rangle$, respectively. Therefore,
  \begin{equation}\label{eq:moat_packing}
    \nu(K,T)=\sum_{X \in \mathcal F}w_X=\left\langle r+\tfrac13s+\tfrac15t,1\right\rangle.
  \end{equation}

  To prove the inequality in Theorem~\ref{thm:packing}, we compute lower bounds on
  $m_1$, $m_3$ and $m_5$ in terms of the vectors $r$, $s$ and $t$, and then use
  the fact that the moats are disjoint, so the sum $m_1+m_3+m_5$ cannot exceed $f$,
  the number of faces of $K$. Simplifying the inequality gives the desired bound.
  
  To bound $m_1$, recall that by property~\ref{M:minimal} of Lemma~\ref{lem:moat-packing},
  every $1$-moat in $\mathcal M$ is of the form $\Mt_{K}^{r_u}(u)$, where $u$ is a
  $5$-vertex in $K$. By Lemma~\ref{lem:perimeter},
  \begin{equation}
  \label{eq:one1-moat}
    \area{\Mt_{K}^{r_u}(u)} = (6-(6-d(u)))r_u^2= 5r_u^2,
  \end{equation}
  and summing over all $1$-moats gives the equality
  \begin{equation}
  \label{eq:1-moat}
    m_1 = 5 \sum_{u\in T}\left(6-d(u)\right)r_u^2 = 5\|r\|^2.
  \end{equation}

  To bound $m_3$, let $\Mt_{K}^{s_u}(X)$ be a non-empty $3$-moat in $\mathcal M$, for
  some $u\in T \cap X$. By the laminarity of $\mathcal M$, the $3$-patch $K[X]$ contains
  the (possibly empty) $1$-moats $\Mt_{K}^{r_u}(u)$, for all $5$-vertices
  $u \in T \cap X$. All the moats are pairwise disjoint, so by~\eqref{eq:one1-moat}
  and the Cauchy--Schwarz inequality,
  \begin{align*}
    \area{K[X]}
      &\geq \sum_{u \in T \cap X}\area{\Mt_{K}^{r_u}(u)}\\
      &\geq 5 \sum_{u\in T \cap X}\left(6-d(u)\right)r_u^2\\
      &\geq \frac{5\left(\sum_{u\in T \cap X}\left(6-d(u)\right)r_u\right)^2}%
            {\sum_{u \in T \cap X} (6-d(u))}\\
      &\geq \frac{5}{3} \left(\sum_{u\in T \cap X}\left(6-d(u)\right)r_u\right)^2.
  \end{align*}
  Hence, by Lemma~\ref{lem:perimeter},
  \begin{align}
    \area{\Mt_K^{s_u}(X)}
      &\geq 3s_u^2+2s_u\sqrt{3\:\area{K[X]}} \notag\\
      &\geq \sum_{u\in T  \cap X}\left(6-d(u)\right)s_u^2+2\sqrt{5} \sum_{u \in T%
        \cap X}\left(6-d(u)\right)r_us_u \label{eq:one3-moat}.
  \end{align}
  Summing over all $3$-moats gives the inequality
  \begin{equation}
  \label{eq:3-moat}
    m_3 \geq \|s\|^2+2\sqrt 5\langle r,s \rangle.
  \end{equation}

  To bound $m_5$, let $\Mt_{K}^{t_u}(Y)$ be a non-empty $5$-moat in $\mathcal M$, for
  some $u\in T\cap Y$. By the laminarity of $\mathcal M$, the $5$-patch $K[Y]$ contains at most one
  non-empty $3$-moat $\Mt_{K}^{s_u}(X)$ of $\mathcal M$. All the moats are pairwise disjoint,
  so by~\eqref{eq:one1-moat}, \eqref{eq:one3-moat} and the Cauchy--Schwarz
  inequality,
  \begin{align*}
    \area{K[Y]}
      &\geq \sum_{u\in T \cap Y}\area{\Mt_K^{r_u}(u)}+%
            \sum_{u\in T \cap Y}\area{\Mt_K^{s_u}(X)}\\
      &\geq 5 \sum_{u\in T \cap Y}\left(6-d(u)\right)r_u^2+\sum_{u \in T \cap Y}%
            \left(6-d(u)\right)\left(2\sqrt 5r_us_u+s_u^2\right)\\
      &=    5 \sum_{u\in T \cap Y}\left(6-d(u)\right)\left(r_u+\tfrac1{\sqrt{5}}%
            s_u\right)^2\\
      &\geq \frac{5\left(\sum_{u \in T \cap Y}\left(6-d(u)\right)\left(r_u+%
            \tfrac 1{\sqrt 5}s_u\right) \right)^2}{\sum_{u \in T \cap Y} (6-d(u))}\\
      &=    \left(\sum_{u \in T \cap Y}\left(6-d(u)\right)\left(r_u+\tfrac1{%
            \sqrt 5}s_u\right) \right)^2.
  \end{align*} 
  Using Lemma~\ref{lem:perimeter},
  \begin{align*}
    \area{\Mt_K^{t_u}(Y)}
      & \geq t_u^2+2t_u\sqrt{\area{K[Y]}} \\
      & \geq \tfrac15\sum_{u \in T \cap Y}(6-d(u))t_u^2+2t_u\sum_{u \in T \cap Y}
        \left(6-d(u)\right)\left(r_u+\tfrac{1}{\sqrt 5}s_u\right) \\
      &=\sum_{u \in T \cap Y}\left(6-d(u)\right)\left(\tfrac15t_u^2+2r_ut_u+
        \tfrac{2}{\sqrt 5}s_ut_u\right),
  \end{align*}
  with equality only if $t_u=0$, because $K$ is a simple triangulation of the sphere,
  and as such has no vertex of degree $1$. Summing over all $5$-moats gives the inequality
  \begin{equation}
  \label{eq:5-moat}
    m_5 \geq \tfrac15\|t\|^2+2\langle r,t \rangle+\tfrac{2}{\sqrt 5}\langle s,t \rangle,
  \end{equation}
  with equality only if $t=0$.
  
  The moats are disjoint, so by
  inequalitites~\eqref{eq:1-moat}, \eqref{eq:3-moat} and~\eqref{eq:5-moat},
  \begin{align*}
   \area K &\geq m_1+m_3+m_5\\
           &\geq 5\|r\|^2 + \|s\|^2 + \tfrac15\|t\|^2 + 2\sqrt 5\langle r,s \rangle +
                 2\langle r,t \rangle + \tfrac{2}{\sqrt 5}\langle s,t \rangle\\
           &=    \left\|\sqrt 5r + s + \tfrac 1{\sqrt 5}t\right\|^2.
  \end{align*}
  Hence, by the Cauchy--Schwarz inequality and~\eqref{eq:moat_packing},
  \begin{align}
    \sqrt{\tfrac15\textstyle\sum_{u \in T}(6-d(u))\;\area K}
      & \geq \sqrt{\textstyle\sum_{u \in T}(6-d(u))} \left\|r + \tfrac 1{\sqrt 5}s + \tfrac 15t\right\| \notag\\
      & \geq \left\langle r + \tfrac 1{\sqrt 5} s + \tfrac 15 t,1\right
             \rangle \label{eq:cauchy-schwarz}\\
      & \geq \left\langle r + \tfrac 13 s + \tfrac 15 t,1\right\rangle
             \label{eq:3-vertices}\\
      & =    \nu(K,T). \notag
  \end{align}
  This completes the proof of the first part of Theorem~\ref{thm:packing}.

  To prove the inequality $\nu(K,T)\leq\sqrt{12\;\area K/5}$, it suffices
  to observe that $\textstyle{\sum}_{u\in T}(6-d(u))\leq 12$
  by Lemma~\ref{lem:gauss-bonnet}. Now suppose that
  $\nu(K,T)=\sqrt{12\;\area K/5}$. By Lemma~\ref{lem:moat-packing}, there
  exists a packing $\mathcal M=\bigcup_{X \in \mathcal F}\Mt_K^{w_X}(X)$
  of $1$-, $3$- and $5$-moats in $K$ of total width $\sqrt{12\;\area K/5}$.
  Then $\sum_{u \in T}(6-d(u))=12$, i.e., all
  vertices of degree less than $6$ have odd degree, namely, $3$ or $5$.
  Equality holds in~\eqref{eq:5-moat} and in~\eqref{eq:3-vertices}, so $t=s=0$.
  Furthermore, equality holds in~\eqref{eq:cauchy-schwarz}, so there is a natural
  number $k \geq 1$ such that $r_u=k$ for every $u \in T$. Therefore, every $u \in T$ has
  degree $5$, so $|T|=12$. By Lemma~\ref{lem:perimeter} each moat
  $\Mt_K^{k}(u) \in \mathcal M$ has area $5k^2$, so $\area K=12\cdot5 k^2=60k^2$.   
  Hence, $K$ is the union of twelve face-disjoint $1$-moats $\Mt^k(u)$,
  for $u \in T$ (see Figure~\ref{fig:disc}). Each $\Mt^k(u)$ can be identified
  with a face of a regular dodecahedron, which shows that $\Aut(K)$ contains
  a subgroup isomorphic to $I_h$. On the other hand, the dual graph of $K$ is
  a fullerene graph, and it can be shown (see e.g.~\cite{DDF09}) that the largest
  possible automorphism group of a fullerene graph is isomorphic to $I_h$. Hence,
  $\Aut(K) \cong I_h$.
    
  Conversely, suppose $K$ is a triangulation of the sphere with $\area K=60k^2$,
  all vertices of degree $5$ and $6$, and $\Aut(K) \cong I_h$. Then it can be
  shown (see~\cite{Cox71,Gol37}) that $K$ can be constructed by pasting
  triangular regions of the (infinite) $6$-regular triangulation of the plane
  into the faces of a regular icosahedron (this is sometimes known in the literature
  as the \emph{Goldberg--Coxeter construction}). The construction is uniquely
  determined by a $2$-dimensional vector $(i,j) \in \ZZ^2$, known as the
  \emph{Goldberg--Coxeter vector} (see Figure~\ref{fig:goldberg-coxeter}).
  Since $\Aut(K)\cong I_h$, we must have $j=0$ or $j=i$.
  The area of $K$ is given by the formula $\area K = 20(i^2 + ij + j^2)$.
  The condition $\area K=60k^2$ implies that the Goldberg--Coxeter vector of $K$ is
  $(k,k)$, which means that the distance between
  any pair of $5$-vertices in $K$ is at least $2k$. Therefore,
  $\bigcup_{u \in T}\Mt^k(u)$ is a packing of $1$-moats of total width
  $12k=12\sqrt{\area K/60}=\sqrt{12\;\area K/5}$, so $\nu(K,T)\geq\sqrt{12\;\area K/5}$.
\end{proof}

\begin{figure}
  \centering
  \begin{tikzpicture}[scale=0.75]
    \def\k{3}
    \foreach \i in {1,...,5}
    {
        \foreach \j in {1,...,\k}
        {
          \draw[rotate=72*\i] (72*3+18:\j)--(72*4+18:\j)
                (72*2+18:\k-\j)--++(0,\j)
                (72*5+18:\k-\j)--++(0,\j);
        }
    }
    \draw (0.2,0.3) node {$u$};
  \end{tikzpicture}
  \caption{The $1$-moat $\Mt^3(u)$ in the proof of Theorem~\ref{thm:packing}.}
\label{fig:disc}
\end{figure}
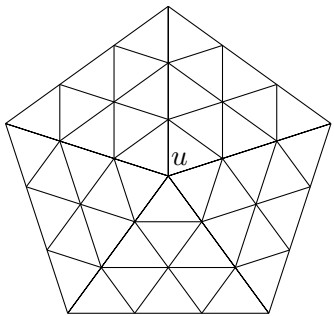

\begin{figure}
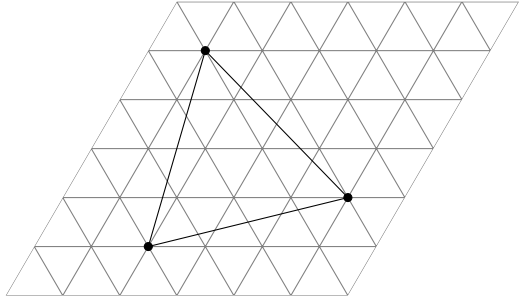

  \centering
  \begin{tikzgraph}[scale=0.75,yscale=sqrt(.75),xslant=.5]
    \def\n{6}
    \clip(0,0)--(\n,0)--(\n,\n)--(0,\n)--cycle;
    \draw[gray] (0,0) grid(\n,\n)[xslant=-1]
                (0,0) grid[ystep=\n](2*\n,2*\n);    
    \draw (2,1)--(5,2)--(1,5)--cycle;
    \draw (2,1) node[vertex] {}
          (5,2) node[vertex] {}
          (1,5) node[vertex] {};
  \end{tikzgraph}
  \caption{The Goldberg--Coxeter construction with Goldberg--Coxeter vector $(3,1)$
           (to go from the bottom left vertex to the vertex on the right, take three
           steps to the right, then make a $60$ degree left turn and take one more
           step).}
\label{fig:goldberg-coxeter}
\end{figure}

\section{Proof of Theorem~\ref{thm:OCT}}\label{sec:T-join}

Given a triangulation $K$ of the sphere, we construct the \emph{refinement} $\hat K$
as follows. First, we subdivide each edge of $K$, that is, we replace it by an
internally disjoint path of length $2$, and then we add three new edges inside every face,
incident to the three vertices of degree $2$. (For an illustration, see Figure~\ref{fig:refinement}.)
Therefore, every face of $K$ is divided into four faces of $\hat K$.
Observe that all the vertices in $V(\hat K)\setminus V(K)$ have degree $6$ in
$\hat K$, so if $T$ is the set of odd-degree vertices of $K$, then $T$
is also the set of odd-degree vertices of $\hat K$.

\begin{figure}
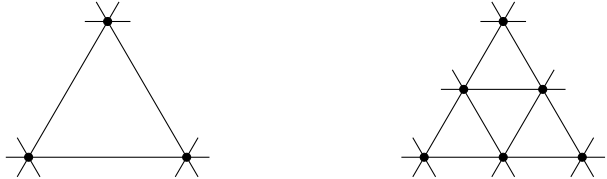

  \centering
  \begin{tikzgraph}[scale=0.6]
    \path (90:2) coordinate (a1);
    \path (210:2) coordinate (a2);
    \path (330:2) coordinate (a3);
    \draw (a1)--(a2)--(a3)--cycle;
    \foreach\i in {1,...,3}
    {
      \draw (a\i) node[vertex] {};
    }
    \foreach\i in {0,...,3}
    {
      \draw (a1)--+(60*\i:0.5);
    }
    \foreach\i in {2,...,5}
    {
      \draw (a2)--+(60*\i:0.5);
    }
    \foreach\i in {4,...,7}
    {
      \draw (a3)--+(60*\i:0.5);
    }
  \end{tikzgraph}
  \hfil
  \begin{tikzgraph}[scale=0.6]
    \path (90:2) coordinate (a1);
    \path (210:2) coordinate (a2);
    \path (330:2) coordinate (a3);
    \path (150:1) coordinate (s1);
    \path (270:1) coordinate (s2);
    \path (30:1) coordinate (s3);
    \draw (a1)--(a2)--(a3)--cycle;
    \draw (s1)--(s2)--(s3)--cycle;
    \foreach\i in {1,...,3}
    {
      \draw (a\i) node[vertex] {};
    }
    \foreach\i in {0,...,3}
    {
      \draw (a1)--+(60*\i:0.5);
    }
    \foreach\i in {2,...,5}
    {
      \draw (a2)--+(60*\i:0.5);
    }
    \foreach\i in {4,...,7}
    {
      \draw (a3)--+(60*\i:0.5);
    }
    \foreach\i in {0,...,1}
    {
      \draw (s1)--+(120+60*\i:0.5);
    }
    \foreach\i in {2,...,3}
    {
      \draw (s2)--+(120+60*\i:0.5);
    }
    \foreach\i in {0,...,1}
    {
      \draw (s3)--+(60*\i:0.5);
    }
    \foreach\i in {1,...,3}
    {
      \draw (s\i) node[vertex] {};
    }
  \end{tikzgraph}
  \caption{A face of a triangulation and its refinement.}
\label{fig:refinement}
\end{figure}

The following lemma was proved in~\cite{FKS12} using a theorem of
Seymour~\cite{Sey81}.

\begin{lemma}\label{lem:refinement}
  If $K$ is a triangulation of the sphere and $T \subseteq V(K)$ is a subset
  of even cardinality, then $\tau(K,T)=\tfrac12 \nu(\hat K,T)$.
\end{lemma}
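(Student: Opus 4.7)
The plan is to prove both inequalities in $\nu(\hat K, T) = 2\tau(K, T)$. For the upper bound, I would lift a minimum $T$-join $J$ of $K$ to an edge set $\hat J \subseteq E(\hat K)$ consisting of the two subdivision halves of each edge of $J$. The parity at every vertex of $V(K)$ is preserved, and each midpoint has degree $0$ or $2$ in $\hat J$, so $\hat J$ is a $T$-join of $\hat K$ of size $2|J|$. Since every $T$-cut meets every $T$-join in a positive odd number of edges and the cuts of a packing are edge-disjoint, $\nu(\hat K, T) \leq |\hat J| = 2\tau(K, T)$.

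For the lower bound, I would invoke Seymour's theorem on the subdivision $K'$ of $K$, i.e., the subgraph of $\hat K$ obtained by deleting the medial edges added inside every face, so that each edge of $K$ is simply replaced by a path of length $2$. Every cycle of $K$ of length $\ell$ becomes an even cycle of length $2\ell$ in $K'$, so $K'$ is bipartite, and Seymour's theorem yields $\tau(K', T) = \nu(K', T)$. As each midpoint has degree $2$ in $K'$ and lies outside $T$, any $T$-join of $K'$ uses $0$ or $2$ of the two halves of each edge of $K$, producing a size-doubling bijection with $T$-joins of $K$. Hence $\tau(K', T) = 2\tau(K, T)$, and so $\nu(K', T) = 2\tau(K, T)$.

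It then remains to transfer a maximum $T$-cut packing from $K'$ to $\hat K$. The crucial structural observation is that $\delta_{\hat K}(X) = \delta_{K'}(X)$ for every $X \subseteq V(K)$, because the edges in $E(\hat K) \setminus E(K')$ are medial edges between two midpoints and hence have no endpoint in $X$. So it suffices to exhibit a maximum $T$-cut packing $\{\delta_{K'}(X_i)\}$ in $K'$ with each $X_i \subseteq V(K)$, and then the same sets will define a $T$-cut packing of the same size in $\hat K$. Producing such a midpoint-free packing is the main technical obstacle: after standard uncrossing to put the packing in laminar form, I would eliminate midpoints from the $X_i$ one at a time by a local exchange, splitting into cases according to whether the two neighbours in $V(K)$ of a midpoint $m_{uv} \in X_i$ lie inside or outside $X_i$, and propagating the change to the nested or enclosing sets in the laminar tree when needed. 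The case analysis is elementary but requires care to simultaneously preserve the $T$-cut property and edge-disjointness.
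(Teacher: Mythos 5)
The paper gives no proof of this lemma at all---it simply cites~\cite{FKS12}---so I am comparing your argument with the standard route. Your first two steps are correct: lifting a minimum $T$-join of $K$ to the $T$-join of $\hat K$ consisting of both halves of each subdivided edge gives $\nu(\hat K,T)\le 2\tau(K,T)$, and Seymour's theorem applied to the bipartite subdivision $K'$ gives $\nu(K',T)=\tau(K',T)=2\tau(K,T)$.

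The transfer step, however, is not merely delicate---it is impossible as stated. In $K'$ every edge joins an original vertex to a midpoint, so for any $X\subseteq V(K)$ the cut $\delta_{K'}(X)$ consists of \emph{all} edges of $K'$ incident to $X$; hence two such cuts are edge-disjoint precisely when the sets $X_i$ themselves are disjoint. A ``midpoint-free'' packing is therefore a collection of pairwise disjoint $T$-odd subsets of $V(K)$, and so has size at most $|T|$, whereas $\nu(K',T)=2\tau(K,T)$ typically far exceeds $|T|$. Already for $T=\{a,b\}$ with $\dist_K(a,b)=d\ge 2$ one has $\nu(K',T)=2d$, while any packing with all sets inside $V(K)$ has size at most $2$. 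So no amount of uncrossing and local exchanges will eliminate the midpoints: the nested ``concentric'' cuts that realise $\nu(K',T)$ necessarily contain midpoints, and your crucial observation that $\delta_{\hat K}(X)=\delta_{K'}(X)$ for $X\subseteq V(K)$, while true, cannot be brought to bear. The real content of the lemma is to show that a laminar family of inclusion-wise minimal $T$-cuts of $K'$ can be normalised, \emph{keeping} the midpoints in the sets, so that the cuts stay pairwise disjoint after the medial edges of $\hat K$ are added, i.e.\ so that for $i\ne j$ no medial edge lies in both $\delta_{\hat K}(X_i)$ and $\delta_{\hat K}(X_j)$. That argument is the missing, and essential, part of your proof.
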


Theorem~\ref{thm:packing} and Lemma~\ref{lem:refinement} immediately
give the following tight upper bound on the minimum size of a postman set in a plane
triangulation with maximum degree $6$.

\begin{theorem}
\label{thm:T-join}
  Let $K$ be a triangulation of the sphere with $f$ faces and maximum degree at most $6$.
  If $T$ is the set of odd-degree vertices of $G$, then
  \[
    \tau(K,T) \leq \sqrt{\textstyle \tfrac15 \sum_{u \in T}(6-d(u))\;\area K}.
  \]
  In particular,
  $\tau(K,T) \leq \sqrt{12\;\area K/5}$ always holds, with equality if and only if
  all vertices have degree $5$ and $6$, $\area K=60k^2$ for some $k \in \NN$, and
  $\Aut(G) \cong I_h$.
\end{theorem}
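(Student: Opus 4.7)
The plan is to apply Theorem~\ref{thm:packing} to the refinement $\hat K$ defined just before Lemma~\ref{lem:refinement}, and then use Lemma~\ref{lem:refinement} to translate the resulting bound on $\nu(\hat K,T)$ into the desired bound on $\tau(K,T)$. First I would verify that $\hat K$ satisfies the hypotheses of Theorem~\ref{thm:packing}: it is a triangulation of the sphere; every new vertex of $\hat K$ has degree $6$; and every original vertex $v \in V(K)$ has exactly the same degree in $\hat K$ as in $K$, since its $K$-neighbours are simply replaced by the midpoints of the edges at $v$. Consequently the maximum degree of $\hat K$ is at most $6$, the set of odd-degree vertices of $\hat K$ coincides with $T$, and $d_{\hat K}(u) = d_K(u)$ for every $u \in T$. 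Moreover each face of $K$ is subdivided into four faces of $\hat K$, so $\area{\hat K} = 4\area K$.

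Applying Theorem~\ref{thm:packing} to $\hat K$ then yields
\[
  \nu(\hat K,T) \le \sqrt{\tfrac{1}{5}\textstyle\sum_{u \in T}(6-d(u))\;\area{\hat K}} = 2\sqrt{\tfrac{1}{5}\textstyle\sum_{u \in T}(6-d(u))\;\area K},
\]
and combining this with Lemma~\ref{lem:refinement} gives $\tau(K,T) = \tfrac{1}{2}\nu(\hat K,T) \le \sqrt{\tfrac{1}{5}\sum_{u \in T}(6-d(u))\;\area K}$, as required. The universal bound $\tau(K,T) \le \sqrt{12\area K/5}$ then follows from Lemma~\ref{lem:gauss-bonnet} applied with $\partial K = \emptyset$, which gives $\sum_{u \in V(K)}(6-d(u)) = 12$; since every vertex outside $T$ contributes a non-negative even integer to this sum, we have $\sum_{u \in T}(6-d(u)) \le 12$.

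For the equality characterisation, the key step is transferring the equality conditions of Theorem~\ref{thm:packing} from $\hat K$ back to $K$. If $\tau(K,T) = \sqrt{12\area K/5}$, then $\nu(\hat K,T) = \sqrt{12\area{\hat K}/5}$, so the equality case of Theorem~\ref{thm:packing} forces every vertex of $\hat K$---and hence every vertex of $K$---to have degree $5$ or $6$, and gives $\area{\hat K} = 60m^2$ and $\Aut(\hat K) \cong I_h$ for some $m \in \NN$. The set of original vertices in $\hat K$ is intrinsically distinguishable (the central triangle of each refined face has only new vertices), so every automorphism of $\hat K$ restricts to an automorphism of $K$, giving $\Aut(K) \cong \Aut(\hat K) \cong I_h$. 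Combining the Goldberg--Coxeter classification of such triangulations with the identity $4\area K = 60m^2$ rules out the vector $(i,0)$ for $K$ and forces its Goldberg--Coxeter vector to be $(k,k)$, whence $\area K = 60k^2$. Conversely, if $K$ satisfies the three conditions, $\hat K$ satisfies the equality conditions of Theorem~\ref{thm:packing}, and the explicit $1$-moat packing constructed there together with Lemma~\ref{lem:refinement} yields $\tau(K,T) = \sqrt{12\area K/5}$. I expect the Goldberg--Coxeter bookkeeping---especially eliminating the $(i,0)$ case for $K$---to be the most delicate part of the argument.
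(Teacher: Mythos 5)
Your proposal is correct and follows essentially the same route as the paper: refine $K$, apply Theorem~\ref{thm:packing} to $\hat K$, translate via Lemma~\ref{lem:refinement}, and transfer the equality conditions from $\hat K$ back to $K$. The only real divergence is the step you flag as delicate---deducing $\area K=60k^2$---where you invoke the Goldberg--Coxeter classification to exclude the $(i,0)$ case, while the paper gets there more cheaply: $4\,\area K=60\hat k^2$ gives $\area K=15\hat k^2$, and since $\area K$ is even (as $3\,\area K=2|E(K)|$), $\hat k$ must be even, whence $\area K=60k^2$.
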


\begin{proof}
  Let $K$ be a triangulation of the sphere with maximum degree at most $6$, and
  let $\hat K$ be its refinement; observe that $\area{\hat K}=4\;\area K$. By
  Lemma~\ref{lem:refinement} and Theorem~\ref{thm:packing},
  \[
    \tau(K,T)=\tfrac12 \nu(\hat K,T)\leq \sqrt{\tfrac15\textstyle{\sum}_{u\in T}(6-d(u))\;\area K}\leq \sqrt{12\;\area K/5},
  \]
  as required.
  
  If $\tau(K,T)=\sqrt{12\;\area K/5}$, then $\nu(\hat K,T) = \sqrt{12\cdot 4\;\area K/5}$,
  so by the second part of Theorem~\ref{thm:packing}, all vertices in $\hat K$
  have degree $5$ and $6$, and this must clearly hold in $K$. Furthermore,
  $4\;\area K=60\hat k^2$, for some $\hat k \in \NN$, so $\area K=15\hat k^2$.
  Since $\area K$ is even, $\hat k=2k$, for some $k \in \NN$, so $\area K=60k^2$.  
  We also have $\Aut(K)\cong \Aut(\hat K) \cong I_h$.
  
  Conversely, suppose $K$ is a triangulation of the sphere with $\area K=60k^2$,
  all vertices of degree $5$ and $6$, and $\Aut(K)\cong I_h$. By
  Theorem~\ref{thm:packing} $\nu(K,T)=\sqrt{12\;\area K/5}$, so
  $\tau(K,T)=\sqrt{12\;\area K/5}$.
\end{proof}

Theorem~\ref{thm:OCT} follows from Theorem~\ref{thm:T-join} by taking the planar
dual.

\begin{proof}[Proof of Theorem~\ref{thm:OCT}]
  Let $G$ be a $3$-connected cubic plane graph on $n$ vertices with all faces of
  size at most $6$, with $p$ pentagonal and $t$ triangular faces.
  By Lemma~\ref{lem:dual}, the dual graph $G^*$ is a plane triangulation
  with $\area{G^*}=n$ and all vertices of degree at most $6$, having exactly $p$ vertices
  of degree $5$ and $t$ vertices of degree $3$. Let $T$ be the set of vertices
  of odd degree, $J^*$ a minimum $T$-join of $G^*$, and $J$ the set of edges of $G$
  which correspond to $J^*$. Since $G^*\setminus J^*$ has no odd-degree vertices,
  $G\setminus J=(G^*\setminus J^*)^*$ has no odd faces, so is bipartite. By Theorem~\ref{thm:T-join},
  \[
    |J|=|J^*| \leq \sqrt{\tfrac15\textstyle{\sum}_{u\in T}(6-d(u))n} = \sqrt{\tfrac15(p+3t)n}.
  \]
  In particular, $|J|\leq \sqrt{12n/5}$, with equality if and only if all faces
  have size $5$ and $6$, $n=60k^2$ for some $k \in \NN$, and $\Aut(G) \cong I_h$.
\end{proof}

\section{Consequences for max-cut and independence number}\label{sec:max-cut}

A classic problem in combinatorial optimisation, known as \emph{max-cut},
asks for the maximum size of an edge-cut in a given graph. This problem is
known to be NP-hard, even when restricted to triangle-free cubic graphs~\cite{Yan78}.
However, for the class of planar graphs, the problem
can be solved in polynomial time using standard tools from combinatorial
optimisation (namely $T$-joins), as observed by Hadlock~\cite{Had75}.
Cui and Wang~\cite{CuiWan09} proved that every planar, cubic graph on
$n$ vertices has a cut of size at least $39n/32-9/16$, improving an earlier
bound of Thomassen~\cite{Tho06}. However, when the face size is bounded by
$6$, we get the following improved bound.

\begin{corollary}
\label{cor:max-cut}
  If $G$ is a $3$-connected cubic plane graph on $n$ vertices with all faces of size at
  most $6$, with $p$ pentagonal and $t$ triangular faces, then $G$ has a
  cut of size at least
  \[
    3n/2-\sqrt{(p+3t)n/5}.
  \]
  In particular, $G$ has a cut of size at least $3n/2-\sqrt{12n/5}$, with
  equality if and only if all faces have size $5$ and $6$, $n=60k^2$ for some
  $k \in \NN$, and $\Aut(G) \cong I_h$.
\end{corollary}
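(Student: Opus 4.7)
The plan is to derive this as a direct corollary of Theorem~\ref{thm:OCT} via the standard duality between maximum cuts and minimum odd cycle transversals. The key observation is the identity
\[
  \textrm{max-cut}(G) \;=\; |E(G)| - \tau_{\odd}(G),
\]
which holds for every graph $G$. I would establish this identity in two directions. For $\geq$: if $J$ is a minimum odd cycle transversal, then $G\setminus J$ is bipartite, so it admits a bipartition $(A, V(G)\setminus A)$ with $E(G)\setminus J\subseteq \delta_G(A)$; hence $|\delta_G(A)|\geq |E(G)|-|J|$. For $\leq$: given any cut $\delta_G(A)$, every cycle crosses it an even number of times, so every odd cycle contains an odd (hence positive) number of edges in $E(G)\setminus \delta_G(A)$; therefore $E(G)\setminus \delta_G(A)$ is an odd cycle transversal of size $|E(G)|-|\delta_G(A)|$.

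With the identity in hand, the cubicity of $G$ gives $|E(G)|=3n/2$, and Theorem~\ref{thm:OCT} yields $\tau_{\odd}(G)\leq \sqrt{(p+3t)n/5}$. Combining these, $G$ admits a cut of size
\[
  |E(G)|-\tau_{\odd}(G)\;\geq\; 3n/2-\sqrt{(p+3t)n/5},
\]
which is the first inequality. The weaker bound $3n/2-\sqrt{12n/5}$ follows from the corresponding weaker upper bound on $\tau_{\odd}(G)$ in Theorem~\ref{thm:OCT}.

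For the equality characterisation, the identity shows that $G$ attains a cut of size exactly $3n/2-\sqrt{12n/5}$ if and only if $\tau_{\odd}(G)=\sqrt{12n/5}$, and Theorem~\ref{thm:OCT} tells us this occurs precisely when all faces have size $5$ and $6$, $n=60k^2$ for some $k\in\NN$, and $\Aut(G)\cong I_h$. There is no genuine obstacle in this proof: the whole content has already been established in Theorem~\ref{thm:OCT}, and the corollary is a two-line consequence of the max-cut/odd-transversal identity, which itself needs only a brief parity argument. The only care required is to phrase the equality characterisation so that both directions of the identity are used.
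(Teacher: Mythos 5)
Your proof is correct and takes essentially the same route as the paper: the paper also obtains the cut as $\delta_G(X)$ for a colour class $X$ of $G\setminus J$ with $J$ a minimum odd cycle transversal, and then invokes Theorem~\ref{thm:OCT}. The only difference is that you spell out the reverse inequality $\mathrm{max\text{-}cut}(G)\leq |E(G)|-\tau_{\odd}(G)$ (needed for the ``only if'' part of the equality characterisation), which the paper leaves implicit.
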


\begin{proof}[Proof of Corollary~\ref{cor:max-cut}]
  Let $G$ be a $3$-connected cubic plane graph on $n$ vertices with all faces
  of size at most $6$. Let $J \subseteq E(G)$ be an odd cycle transversal, and
  let $X$ be a colour class of $G\setminus J$. Then
  $|\delta_G(X)| = 3n/2-|J|$. By Theorem~\ref{thm:OCT}, we can always find $J$
  such that $|J|\leq \sqrt{12n/5}$, with equality if and only if all faces have
  size $5$ and $6$, $n=60k^2$ for some $k \in \NN$, and $\Aut(G) \cong I_h$.
\end{proof}

A set of vertices in a graph is \emph{independent} if there is no edge between
any of its vertices, and the maximum size of an independent set in $G$ is the
\emph{independence number} $\alpha(G)$. Heckman and Thomas~\cite{HecTho06} showed
that every triangle-free, cubic, planar graph has an independent set of size at
least $3n/8$, and this bound is tight. Again, forbidding faces of size greater
than $6$ gives a much better bound.

\begin{corollary}
\label{cor:independence}
  If $G$ is a $3$-connected cubic plane graph on $n$ vertices with all faces of size at
  most $6$, with $p$ pentagonal and $t$ triangular faces, then
  \[
    \alpha(G) \geq n/2-\sqrt{(p+3t)n/20}.
  \]
  In particular, $\alpha(G) \geq n/2-\sqrt{3n/5}$, with equality
  if and only if all faces have size $5$ and $6$, $n=60k^2$ for some
  $k \in \NN$, and $\Aut(G) \cong I_h$.
\end{corollary}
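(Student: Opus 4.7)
The plan is to derive this corollary from Theorem~\ref{thm:OCT} in the same spirit as Corollary~\ref{cor:max-cut}: any odd cycle transversal $J$ of $G$ exhibits a bipartite subgraph $G\setminus J$, and a colour class of that subgraph---trimmed to kill the few edges of $J$ that remain inside it---yields an independent set of $G$.

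In detail, let $J$ be a minimum odd cycle transversal. Minimality forces every edge of $J$ to lie inside one colour class of $G\setminus J$, since any edge crossing the bipartition could be removed from $J$ without destroying bipartiteness. Write $(A,B)$ for the bipartition of $G\setminus J$ and split $J=J_A\cup J_B$ according to which class contains each edge, so that $|J_A|+|J_B|=|J|$. Deleting one endpoint of every edge of $J_A$ from $A$ leaves an independent set of $G$ of size $|A|-|J_A|$, and symmetrically $B$ yields one of size $|B|-|J_B|$; averaging,
\[
  \alpha(G)\;\ge\;\tfrac12\bigl(|A|+|B|-|J_A|-|J_B|\bigr)\;=\;\tfrac12(n-|J|).
\]
Substituting the bounds $|J|\le\sqrt{(p+3t)n/5}\le\sqrt{12n/5}$ from Theorem~\ref{thm:OCT} proves the two displayed inequalities.

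For the equality characterisation of the uniform bound, the ``only if'' direction is immediate: if $\alpha(G)=n/2-\sqrt{3n/5}$, the chain above forces $|J|\ge\sqrt{12n/5}$, and together with Theorem~\ref{thm:OCT} this gives $\tau_{\odd}(G)=\sqrt{12n/5}$ and the asserted structural conclusions. The ``if'' direction, that every extremal $G$ satisfies $\alpha(G)\le n/2-\sqrt{3n/5}$, cannot be extracted from the value of $\tau_{\odd}$ alone, because the inequality $\alpha(G)\ge n/2-\tau_{\odd}(G)/2$ is strict for many non-extremal graphs in our class---for instance the dodecahedron $C_{20}$ has $\tau_{\odd}=6$ and $\alpha=8>7=n/2-\tau_{\odd}/2$. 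To close the gap for extremal graphs, one exploits the Goldberg--Coxeter description from the proof of Theorem~\ref{thm:packing}, which partitions $V(G)$ into twelve congruent disc patches $\Mt^k(u)$ of $5k^2$ vertices each (one around each pentagonal face); a direct combinatorial computation inside a single patch gives $\alpha(G[\Mt^k(u)])=k(5k-1)/2$, and summing yields $\alpha(G)\le 12\cdot k(5k-1)/2=n/2-\sqrt{3n/5}$.

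The principal obstacle is precisely this ``if'' direction of the equality characterisation: the lower bound and the ``only if'' direction both drop straight out of Theorem~\ref{thm:OCT}, but establishing that the bound is attained for every extremal graph demands a structural argument on the Goldberg--Coxeter patches that lies beyond the $T$-join machinery used so far.
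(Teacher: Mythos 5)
Your derivation of the inequality is correct and is essentially the paper's: the paper takes an odd cycle vertex transversal $U$ with $|U|\le\tau_{\odd}(G)$ and uses $\alpha(G)\ge\alpha(G\setminus U)\ge(n-|U|)/2$, which is the same bound $\alpha(G)\ge n/2-\tau_{\odd}(G)/2$ that you obtain by trimming a colour class; your ``only if'' direction (tightness of the chain forces $\tau_{\odd}(G)=\sqrt{12n/5}$, whence Theorem~\ref{thm:OCT} applies) is also what the paper intends. Where you diverge is the ``if'' direction. The paper disposes of it in one line, by observing that a minimum $T$-join of $G^*$ uses at most one edge per triangular face and hence dualises to a matching of $G$; you instead invoke the Goldberg--Coxeter structure of the extremal graphs and bound $\alpha$ patch by patch. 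Your route is sound and arguably more transparent: in the extremal case $V(G)$ is partitioned by the twelve face-disjoint moats $\Mt^k(u)$, and each moat decomposes into $k$ concentric rings of faces, the $j$-th ring consisting of $5(2j-1)$ triangles of $K$ that form a cycle $C_{5(2j-1)}$ in $G$; hence an independent set meets the $j$-th ring in at most $5j-3$ vertices, giving $\alpha(G[\Mt^k(u)])\le\sum_{j=1}^{k}(5j-3)=k(5k-1)/2$, exactly the value you assert, and so $\alpha(G)\le 6k(5k-1)=n/2-\sqrt{3n/5}$. The ``obstacle'' you flag is real---your dodecahedron example correctly shows that $\alpha\ge n/2-\tau_{\odd}/2$ is not an equality in general, so some structural input beyond the $T$-join machinery is unavoidable---but it is closed by precisely the ring decomposition you sketch; you should spell out that one computation rather than leave it as an assertion, since it is the only step not already supplied by Theorem~\ref{thm:OCT}.
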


\begin{proof}[Proof of Corollary~\ref{cor:independence}]
  Every graph $G$ contains an odd cycle vertex transversal $U$ such that
  $|U| \leq \tau_{\odd}(G)$, so
  $\alpha(G) \geq \alpha(G\setminus U) \geq n/2-\tau_{\odd}(G)/2$. Therefore, by
  Theorem~\ref{thm:OCT}, $\alpha(G) \geq n/2-\sqrt{3n/5}$, for every $3$-connected
  cubic graph $G$ with all faces of size at most $6$. When $J^*$ is
  a minimum $T$-join of $G^*$, every face of $G^*$ is incident to at most one edge
  of $J^*$. This means that the set $J \subset E(G)$ corresponding to $J^*$ is a
  matching of $G$. Therefore, by Theorem~\ref{thm:OCT}, equality holds if and only
  if all faces have size $5$ and $6$, $n=60k^2$ for some $k \in \NN$, and
  $\Aut(G) \cong I_h$.
\end{proof}

\section{Concluding remarks}\label{sec:conclusion}

Clearly, a necessary condition for $\tau_{\odd}=O(\sqrt n)$ is that
$\nu_{\odd}=O(\sqrt n)$. In the case of planar graphs, the theorem of
Kr\'al\!'\ and Voss~\cite{KraVos04} mentioned
in the introduction guarantees that it is also a sufficient condition.
It can be shown that $\nu_{\odd}=O(\sqrt n)$ is also a
necessary and sufficient condition for having a max-cut of size at least
$3n/2-O(\sqrt n)$, and for having an independent set of size at least $n/2-O(\sqrt n)$.

It is not hard to construct an infinite
family of $3$-connected cubic plane graphs with all faces of size at most $7$
such that $\tau_{\odd} \geq \varepsilon n$, for a constant $\varepsilon > 0$.
This shows that the condition on the size of faces in Theorem~\ref{thm:OCT} and
Corollaries~\ref{cor:max-cut} and~\ref{cor:independence} cannot be relaxed.

To construct such a family, consider the graphs $C$ and $R$ in
Figure~\ref{fig:57graph2}. Note that $C$ is embedded in a disc, and $R$ is embedded
in a cylinder. There are ten vertices on the boundary of $C$ and also on each boundary
of $R$, with the degree alternating between $2$ and $3$. We can paste $k$ copies of
$R$ along their boundaries, and then paste a copy of $C$ on each boundary of the
resulting cylinder. Assuming $k>0$, this gives a $3$-connected cubic plane graph $G$ on
$n=15+40k$ vertices with all faces of size $5$ and $7$, such that
$\nu_{\odd}(G) \geq 4+5k>\tfrac18n$.

\begin{figure}
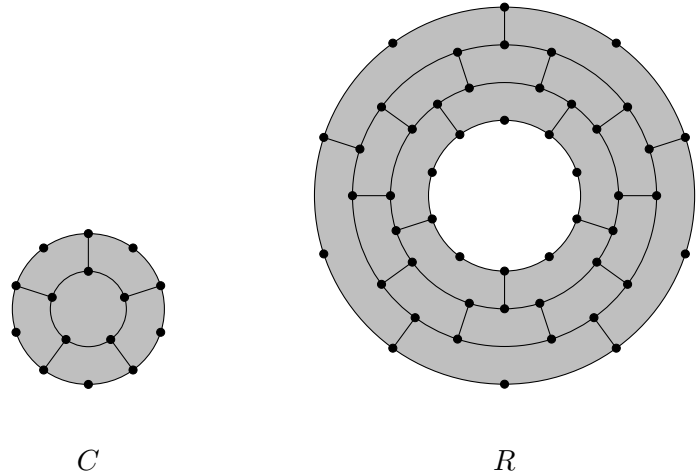

  \centering
  \begin{tikzgraph}[scale=0.5]
    \draw[fill=gray!50] (0,0) circle (2cm);
    \draw (0,0) circle (1cm);
    \foreach \i in {0,...,4}
      \draw (90+72*\i:1)--(90+72*\i:2);
    \foreach \i in {0,...,4}
    {
      \draw (90+72*\i:1) node[vertex] {};
      \draw (90+72*\i:2) node[vertex] {};
      \draw (54+72*\i:2) node[vertex] {};
    }
    \node at (0,-4) {$C$};
  \end{tikzgraph}
  \hfil
  \begin{tikzgraph}[scale=0.5]
    \draw[fill=gray!50] (0,0) circle (5cm);
    \draw (0,0) circle (4cm);
    \draw (0,0) circle (3cm);
    \draw[fill=white] (0,0) circle (2cm);
    \foreach \i in {0,...,4}
      \draw (54+72*\i:2)--(54+72*\i:3);
    \foreach \i in {0,...,9}
      \draw (72+36*\i:3)--(72+36*\i:4);
    \foreach \i in {0,...,4}
      \draw (90+72*\i:4)--(90+72*\i:5);
    \foreach \i in {0,...,9}
    {
      \draw (90+36*\i:2) node[vertex] {};
      \draw (72+36*\i:3) node[vertex] {};
      \draw (72+36*\i:4) node[vertex] {};
      \draw (90+36*\i:5) node[vertex] {};
    }
    \foreach \i in {0,...,4}
    {
      \draw (54+72*\i:3) node[vertex] {};
      \draw (90+72*\i:4) node[vertex] {};
    }
    \node at (0,-7) {$R$};
  \end{tikzgraph}
  \caption{The graphs $C$ and $R$ used to construct an infinite family of
  $3$-connected cubic plane graphs with all faces of size $5$ and $7$ such
  that $\nu_{\odd}>\tfrac18n$.}
\label{fig:57graph2}
\end{figure}

Finally, we remark that bounding $\tau_{\odd}$ is much simpler if the graph contains no
pentagonal faces. In this case, the bound in Theorem~\ref{thm:OCT} can be improved to
$\tau_{\odd}(G) \leq \sqrt{tn/3}$, where $t$ is the number of triangular
faces. In particular, $\tau_{\odd}(G) \leq \sqrt{4n/3}$, with equality if and only if
all faces have size $3$ and $6$, $n=12k^2$ for some $k \in \NN$, and
$\Aut(G) \cong T_d$. Corollaries~\ref{cor:max-cut} and~\ref{cor:independence}
can be strengthened in the same way.

\bibliographystyle{plain}
\bibliography{OCT-cubic}

\end{document}